\documentclass[12pt, leqno,twoside]{article}
\usepackage{amssymb}
\usepackage{amsmath}
\numberwithin{equation}{section}
\usepackage{amsthm}
\usepackage{graphicx}
\usepackage[pagebackref,colorlinks,citecolor=blue,linkcolor=blue]{hyperref}
\usepackage{cite}

\usepackage{todonotes}
\usepackage{amsmath}
\usepackage{amsfonts}
\usepackage{amssymb}
\usepackage{amsmath,bbm,amssymb,amsxtra}
\usepackage{mathrsfs}
\usepackage{enumerate}
\usepackage{caption}
\allowdisplaybreaks
\usepackage{epsfig}
\usepackage{ae}
\usepackage{epstopdf}
\DeclareGraphicsRule{*}{mps}{*}{}
\textwidth=16cm
\textheight=21.2cm
\oddsidemargin 0.45cm
\evensidemargin 0.45cm

\parindent=16pt

\def\vint{\mathop{\mathchoice%
         {\setbox0\hbox{$\displaystyle\intop$}\kern 0.22\wd0%
          \vcenter{\hrule width 0.6\wd0}\kern -0.82\wd0}%
         {\setbox0\hbox{$\textstyle\intop$}\kern 0.2\wd0%
          \vcenter{\hrule width 0.6\wd0}\kern -0.8\wd0}%
         {\setbox0\hbox{$\scriptstyle\intop$}\kern 0.2\wd0%
          \vcenter{\hrule width 0.6\wd0}\kern -0.8\wd0}%
         {\setbox0\hbox{$\scriptscriptstyle\intop$}\kern 0.2\wd0%
          \vcenter{\hrule width 0.6\wd0}\kern -0.8\wd0}}%
         \mathopen{}\int}

\newcommand{\diam}{\text{\rm\,diam}}

\newcommand{\dist}{\text{\rm dist}}

\newtheorem{thm}{Theorem}[section]

\newtheorem{cor}[thm]{Corollary}
\newtheorem{lem}[thm]{Lemma}
\newtheorem{prop}[thm]{Proposition}

\newtheorem{claim}{Claim}[section]
\newtheorem{subclaim}{Subclaim}
\newtheorem{conj}[equation]{Conjecture}
\newtheorem{case}{Case}[section]
\newtheorem*{mysolution}{Solution}
\newtheorem{step}{Step}[section]
\theoremstyle{definition}
\newtheorem{defn}[thm]{Definition}

\newtheorem{prob}[equation]{Problem}
\newtheorem{ques}[equation]{Question}
\newtheorem{rem}{Remark}[section]
\newtheorem{rems}{Remarks}[section]
\newcounter {own}
\def\theown {\thesection       .\arabic{own}}

\newenvironment{pf}[1][]{%
 \vskip 3mm
 \noindent
 \ifthenelse{\equal{#1}{}}%
  {{\slshape Proof. }}%
  {{\slshape #1.} }%
 }%
{\qed\bigskip}

\newcounter{alphabet}

\makeatletter

\newcommand{\IR}{{\mathbb R}}

\newcommand{\RNum}[1]{\uppercase\expandafter{\romannumeral #1\relax}}

\def\be{\begin{equation}}
\def\ee{\end{equation}}

\newcommand{\ben}{\begin{enumerate}}
\newcommand{\een}{\end{enumerate}}

\newcommand{\blem}{\begin{lem}}
\newcommand{\elem}{\end{lem}}
\newcommand{\bthm}{\begin{thm}}
\newcommand{\ethm}{\end{thm}}
\newcommand{\bcor}{\begin{cor}}
\newcommand{\ecor}{\end{cor}}
\newcommand{\beg}{\begin{examp}}
\newcommand{\eeg}{\end{examp}}
\newcommand{\begs}{\begin{examples}}
\newcommand{\eegs}{\end{examples}}
\newcommand{\bdefe}{\begin{defn}}
\newcommand{\edefe}{\end{defn}}
\newcommand{\bprob}{\begin{prob}}
\newcommand{\eprob}{\end{prob}}
\newcommand{\bques}{\begin{ques}}
\newcommand{\eques}{\end{ques}}
\newcommand{\bei}{\begin{itemize}}
\newcommand{\eei}{\end{itemize}}
\newcommand{\bcl}{\begin{claim}}
\newcommand{\ecl}{\end{claim}}
\newcommand{\bscl}{\begin{subclaim}}
\newcommand{\escl}{\end{subclaim}}
\newcommand{\bca}{\begin{case}}
\newcommand{\eca}{\end{case}}
\newcommand{\bstep}{\begin{step}}
\newcommand{\estep}{\end{step}}
\newcommand{\bsol}{\begin{mysolution}}
\newcommand{\esol}{\end{mysolution}}
\newcommand{\bcon}{\begin{conj}}
\newcommand{\econ}{\end{conj}}
\newcommand{\bcons}{\begin{conjs}}
\newcommand{\econs}{\end{conjs}}
\newcommand{\bprop}{\begin{prop}}
\newcommand{\eprop}{\end{prop}}
\newcommand{\br}{\begin{rem}}
\newcommand{\er}{\end{rem}}
\newcommand{\brs}{\begin{rems}}
\newcommand{\ers}{\end{rems}}
\newcommand{\bo}{\begin{obser}}
\newcommand{\eo}{\end{obser}}
\newcommand{\bos}{\begin{obsers}}
\newcommand{\eos}{\end{obsers}}

\newcommand{\bpf}{\begin{pf}}
\newcommand{\epf}{\end{pf}}

\newcommand{\ba}{\begin{array}}
\newcommand{\ea}{\end{array}}
\newcommand{\beq}{\begin{eqnarray}}
\newcommand{\beqq}{\begin{eqnarray*}}
\newcommand{\eeq}{\end{eqnarray}}
\newcommand{\eeqq}{\end{eqnarray*}}

\begin{document}

\title{\Large\bf Quasi-symmetries between metric spaces and rough quasi-isometries between their infinite hyperbolic cones
 \footnotetext{\hspace{-0.35cm}
 $2020$ {\it Mathematics Subject classfication}: 30C65, 53C23
 \endgraf{{\it Key words and phases}: quasi-symmetry, rough quasi-isometry, (infinite) hyperbolic cone, Gromov hyperbolic space.}
}
}
\author{Manzi Huang and Zhihao Xu}
\date{ }
\maketitle

\begin{abstract}
In this paper, we first prove that any power quasi-symmetry of two metric spaces induces a rough quasi-isometry between their infinite hyperbolic cones. Second,  we prove that for a complete metric space $Z$, there exists a point $\omega$ in the Gromov boundary of its infinite hyperbolic cone such that $Z$ can be seen as the Gromov boundary relative to $\omega$ of its infinite hyperbolic cone.
Third, we prove that for a visual Gromov hyperbolic metric space $X$ and a Gromov boundary point $\omega$, $X$ is roughly similar to the infinite hyperbolic cone of its Gromov boundary relative to $\omega$.
These are the generalizations of Theorem 7.4, Theorem 8.1 and Theorem 8.2 in \cite{BSC} since the underlying spaces are not assumed to be bounded and the hyperbolic cones are infinite.
\end{abstract}

\section{Introduction}\label{sec-1}

Gromov hyperbolic spaces have been studied by many authors, see e.g., \cite{V, BuSc, BSC, BHK, GH, BuC, Gr87, BBS}.  Among the authors,
Bonk-Schramm proved that every Gromov hyperbolic geodesic metric space with {\it bounded growth at some scale} is roughly similar to a convex subset of a hyperbolic $n$-space $\mathbb H^n$ for some integer $n$ (see \cite[Theorem 10.2]{BSC}). See \cite[Section 9]{BSC} for the definition of bounded growth at some scale.
 To prove this result, Bonk-Schramm constructed a class of Gromov hyperbolic spaces based on bounded metric spaces $(Z, d_Z)$, named as {\it hyperbolic cones}, which are defined as follows:
\begin{equation*}
	\text{Con}(Z)=Z\times(0, D(Z)],
\end{equation*}
where $D(Z)= \diam Z$. Then they demonstrated that $(\text{Con}(Z),\rho)$ is Gromov hyperbolic (see \cite[Theorem 7.2]{BSC}), where the metric $\rho:\text{Con}(Z)\times\text{Con}(Z)\to[0,\infty)$
is defined by
\begin{equation}\label{metric-cone}
	\rho((x,s), (y,t))=2\log\bigg(\frac{d_Z(x, y)+s\vee t}{\sqrt{st}}\bigg)
\end{equation}
(see \cite[Lemma 7.1]{BSC}).

In \cite{BSC}, the authors asked what can remain of the conclusion of \cite[Theorem 10.2]{BSC} if the assumption that $X$ has bounded growth at some scale is removed. Bonk-Schramm considered this problem themselves in \cite{BSC}. To discuss this problem, they introduced a class of metric spaces $(\text{Con}_h(Z), d_h)$, where
\beq\label{11-16-1}
	\text{Con}_h(Z)=Z\times(0, \infty),
\eeq
 the metric $d_{h}:$ $\text{Con}_h(Z)\times\text{Con}_h(Z)\to[0,\infty)$
is defined by the formula:
\begin{equation*}
d_{h}((x, s), (y, t))={\rm cosh }^{-1}\Big(1+\frac{d_Z(x, y)^2+(s-t)^2}{2st}\Big)
\end{equation*} for $(x, s)$ and $(y, t)\in\text{Con}_h(Z)$,
and ${\rm cosh}$ denotes the hyperbolic cosine function.
Then all $(\text{Con}_h(Z), d_{h})$ are Gromov hyperbolic (see \cite[Section 10]{BSC}), and
every Gromov hyperbolic space is roughly similar into the space $(\text{Con}_h(l_\infty(A)), d_h)$ (see \cite[Theorem 10.4]{BSC} for the details).
See \cite{Gr87, TrV} for more discussions on this line.

Obviously, we can extend the metric defined by \eqref{metric-cone} from the hyperbolic cone $\text{Con}(Z)$ to its infinite case $\text{Con}_h(Z)$, and in the following, we denote the extended metric by $\rho_h$. Sometimes, we also use the notation $\rho_{h,Z}$ to emphasize the underlying spaces.
Then the similar reasoning as in the proofs of \cite[Lemma 7.1 and Theorem 7.2]{BSC} ensures that $\rho_h$ is a metric on $\text{Con}_h(Z)$, and the metric space $(\text{Con}_h(Z), \rho_h)$ is Gromov hyperbolic.
 In this paper, we always equip ${\rm Con}_h (Z)$ with the metric $\rho_h$. To distinguish ${\rm Con}_h(Z)$ from ${\rm Con}(Z)$, in the following, we call it the {\it infinite hyperbolic cone} of $Z$.

 In \cite{BSC}, Bonk-Schramm proved that, for any power quasi-symmetry between two bounded metric spaces $(Z,d_Z)$ and $(W,d_W)$, there is a rough quasi-isometry between ${\rm Con}(Z)$ and ${\rm Con}(W)$  (see \cite[Theorem 7.4]{BSC}).
The first purpose of this paper is to establish an analogue of \cite[Theorem 7.4]{BSC} in our setting, that is, the underlying metric spaces are not assumed to be bounded and the hyperbolic cones are infinite. Since this study concerns quasi-symmetries, the metric spaces involved in this paper are assumed to have at least three points.
Our first result is as follows.

\begin{thm}\label{thm-1}
	Suppose that $f$: $(Z, d_Z)\to (W, d_W)$ is a $(\theta, \lambda)$-power quasi-symmetry with $\theta\geq 1$ and $\lambda\geq 1$. Then there is a $(\theta, k)$-rough quasi-isometry $\widehat{f}$:  $({\rm Con}_h(Z),\rho_{h,Z}) \to ({\rm Con}_h(W),\rho_{h,W})$, where $k=k(\theta, \lambda)$. Furthermore, if $f$ is an $(\alpha, C)$-snowflake mapping with $\alpha>0$ and $C\geq 1$, then $\widehat{f}$ is an $(\alpha,k^\prime)$-rough
	similarity with $k^\prime=k^\prime(\alpha, C)$.
\end{thm}

Here, the notation $k(\theta, \lambda)$ denotes a positive constant depending only on $\theta$ and $\lambda$.

For a Gromov hyperbolic space $X$, we use the notation $\partial_{G}X$ to denote the {\it Gromov boundary} of $X$ defined as the set of all equivalence classes of sequences converging to infinity.
Given a point $\omega\in \partial_G X$, we use the notation $\partial_\omega X$ to denote the {\it Gromov boundary relative to $\omega$} of $X$ defined as the set of all equivalence classes of sequences in $X$ converging to infinity with respect to $\omega$. The precise definitions of the boundaries $\partial_G X$ and $\partial_\omega X$ will be given in Section \ref{sec-3-1} below. It is known that there is an identification of $\partial_\omega X$ and $\partial_G X\setminus\{\omega\}$, see Proposition \ref{prop-Gromov} below.

In \cite{BSC}, Bonk-Schramm also proved that, as sets, the Gromov boundary $\partial_G{\rm Con}(Z)$ of ${\rm Con}(Z)$ and $Z$ can be identified with each other, and $d_Z$ induces a visual metric on $\partial_G{\rm Con}(Z)$ with parameter $1$, when $(Z, d_Z)$ is a complete bounded metric space.
 Furthermore, if $X$ is a visual Gromov hyperbolic metric space, then $X$ and ${\rm Con}(\partial_G X)$ are roughly similar (see \cite[Theorem 8.1 and Theorem 8.2]{BSC}).
Recall that a metric space $X$ is called $k$-{\it visual with respect to} $o\in X$, if each point in $X$ lies on a $k$-roughly geodesic ray emanating from $o$. We call $X$ {\it visual}, if it is $k$-visual with respect to some point $o\in X$ for some $k\geq 0$.
The second purpose of this paper is to establish the analogues of \cite[Theorem 8.1]{BSC} and \cite[Theorem 8.2]{BSC} by using the Gromov boundary relative to $\omega$ to replace the Gromov boundary.

For a complete metric space $(Z, d_Z)$, there is a point $\omega\in \partial_G {\rm Con}_h(Z)$ such that any sequence $\{(z_n, h_n)\}$ in ${\rm Con}_h(Z)$ which satisfies
 $
\lim_{n\to\infty}h_n=\infty
$
belongs to the equivalence class of $\omega$, see Lemma \ref{union} below. Then we get the Gromov boundary relative to $\omega$ of ${\rm Con}_h(Z)$, denoted by $\partial_\omega {\rm Con}_h(Z)$.

\begin{thm}\label{thm-2}
Suppose that $(Z, d_Z)$ is a complete metric space. Then $\partial_\omega {\rm Con}_h(Z)$ and $Z$ can be identified as sets, and $d_Z$ induces a visual metric $d$ on $\partial_\omega {\rm Con}_h(Z)$ based at $\omega$ with parameter $1$.
 \end{thm}

Remark \ref{rem-3-1} below shows that Theorem \ref{thm-2} coincides with \cite[Theorem 8.1]{BSC} when $(Z, d_Z)$ is a complete bounded metric space.

In the study of asymptotic geometry, Buyalo-Schroeder constructed a class of Gromov hyperbolic spaces, named as {\it hyperbolic fillings} \cite{BuSc}.
A hyperbolic filling of a metric space $(Z, d_Z)$ is a simplicial graph with construction parameters $\alpha$ and $\tau$.
See Section \ref{sec-3} for the details.
Given a completed metric space $(Z, d_Z)$, the hyperbolic cone ${\rm Con}_h(Z)$ is roughly similar to every hyperbolic filling of $Z$, see Theorem \ref{thm-2.9} below.
Jordi proved that for a bounded metric space $(Z, d_Z)$, each hyperbolic filling of $(Z, d_Z)$ is roughly isometric to the hyperbolic filling of $(Z, d_Z^\prime)$ with same construction parameters, where $d_Z^\prime$ is the inversion of $d_Z$ at a point in $Z$ (see \cite[Theorem 7]{Jordi}).
Based on these facts, we get the following result which is a generalization of \cite[Theorem 8.2]{BSC}.

\begin{thm}\label{thm-3}
Suppose that $X$ is a visual Gromov hyperbolic metric space and $\omega\in \partial_G X$. Then $X$ and ${\rm Con}_h(\partial_\omega X)$ are roughly similar.
 \end{thm}

Throughout this paper, the letter $C = C(a,b,\ldots)$ will denote a positive constant that depends only on $a,$ $b,$ $\ldots$, and may change at different occurrences.
The notation $O(1)$ will be used for a real number whose absolute value is bounded by a positive numerical constant that can be computed explicitly in principle. Similarly, we use the notation $O_{a, b, c,\cdots}(1)$ for a real number whose absolute value is bounded by a constant which can be chosen in a way only depending on the parameters $a, b, c,\cdots$.

The paper is organized as follows. In Section \ref{sec-2}, necessary notions and notations will be introduced. In Section \ref{sec-3}, we will introduce infinite hyperbolic cones and hyperbolic fillings and discuss their intrinsic relationship.
Sections \ref{extension}, \ref{sec-5} and \ref{sec-6} shall be devoted to the proofs of Theorems \ref{thm-1}, \ref{thm-2} and \ref{thm-3}, respectively.

\section{Preliminaries}\label{sec-2}

In this section, we shall introduce necessary notions and notations.

\subsection{Rough quasi-isometries and quasi-symmetries}

	Let $(X, d_X)$ and $(Y, d_Y)$ denote metric spaces. The distance of two sets $A, B\subset X$ is denoted by $\dist(A, B)$, i.e., $\dist(A, B)=\inf\{d_X(x,y):\; x\in A,\; y\in B\}$.
A set $A\subset X$ is called {\it $k$-cobounded} (in $X$) for $k\geq 0$ if $\dist(\{x\}, A)\leq k$ for every point $x\in X$. If $A$ is $k$-cobounded for some $k\geq0$, then we say that $A$ is {\it cobounded}.

Let $f$: $(X, d_X)\to (Y, d_Y)$ be a mapping (not necessary continuous), and let $k\geq 0$ and $\alpha\geq 1$ be constants. Suppose that $f(X)$ is $k$-cobounded in $Y$. If, in addition, for all $x, z\in X$,
\begin{equation*}
\alpha^{-1}d_X(x,z)-k\leq d_Y(f(x),f(z))\leq\alpha d_X(x,z)+k,
\end{equation*}
then $f$ is called an $(\alpha, k)$-{\it rough quasi-isometry}. If
\begin{equation}\label{similar}
\alpha d_X(x,z)-k\leq d_Y(f(x),f(z))\leq\alpha d_X(x,z)+k,
\end{equation}
then $f$ is an $(\alpha, k)$-{\it rough similarity}. If $\alpha=1$ in \eqref{similar}, then  $f$ is a {\it $k$-rough isometry}.

A {\it geodesic}, a {\it geodesic ray} or a {\it geodesic segment} in $X$ is an isometry $\gamma: I\to X$ into $X$, where $I$ is $\mathbb R$, $[0,\infty)$ or a closed segment in $\mathbb R$, respectively.
Similarly, if $\gamma: I\to X$ is a $k$-rough isometry of $I=\mathbb R$ into $X$, we call it
a {\it $k$-rough geodesic}. We also speak of {\it $k$-roughly geodesic rays} or {\it $k$-roughly
geodesic segments}, if $I=[0,\infty)$ or $I$ is a closed segment in $\mathbb R$, respectively.
A {\it geodesic metric space} is a metric space $X$ such that for any two points $x, y\in X$ there is a geodesic segment joining $x$ and $y$.

\begin{defn}\label{def-qs}
 A  homeomorphism $f$ between two metric spaces $(Z, d_Z)$ and $(W, d_W)$ is called {\it $\eta$-quasi-symmetric} if there exists a self-homeomorphism $\eta$ of $[0, +\infty)$ such that for all triples of points $x, y, z\in Z$,
	$$\frac{d_W(f(x), f(z))}{d_W(f(y), f(z))}\leq \eta\left(\frac{d_Z(x, z)}{d_Z(y, z)}\right).$$
	If there are constants $\theta\geq 1$ and $\lambda\geq 1$ such that
	\begin{equation*}\label{eq-1.1}
		\eta_{\theta, \lambda}(t)=
		\left\{\begin{array}{cl}
			\lambda t^{\frac{1}{\theta}}& \text{for} \;\; 0<t<1, \\
			\lambda t^{\theta}& \text{for} \;\; t\geq 1,
		\end{array}\right.
	\end{equation*}
	then $f$ is called a $(\theta, \lambda)$-{\it power quasi-symmetry}. Here, the notation $\eta_{\theta, \lambda}$ means that the control function $\eta$ depends only on the given parameters $\theta$ and $\lambda$.
	
A  homeomorphism $f$: $(Z, d_Z)\to (W, d_W)$ is called an {\it $(\alpha,C)$-snowflake mapping} if there exist constants $\alpha>0$ and $C\geq 1$ such that for all $x, y\in Z$,
\begin{equation*}
C^{-1}d_Z(x, y)^{\alpha}\leq d_W(f(x), f(y))\leq Cd_Z(x, y)^{\alpha}.
\end{equation*}
\end{defn}

Obviously, every snowflake mapping is a power quasi-symmetry. The inverses and the compositions of power quasi-symmetries are again power quasi-symmetries. Indeed, the inverse of a $(\theta, \lambda)$-power quasi-symmetry is a $(\theta, C)$-power quasi-symmetry, where $C=C(\theta, \lambda)$ (cf. \cite[Proposition 10.6]{H}).

A {\it quasimetric space} is a set $Z$ together with a map $d:Z\times Z\to[0,\infty)$ such that $(1)$ $d(x, y)=0$ if and only if $x=y$; $(2)$ $d(x, y)=d(y, x)$ for all $x$, $y\in Z$; $(3)$ there is a constant $C\geq 1$ such that $d(x,y)\leq C\max\{d(x, z), d(z, y)\}$ for all $x, y, z\in Z$. Obviously, each metric space is a quasimetric space.

For two quasimetric spaces $(Z, d_1)$ and $(Z, d_2)$, we say that $d_1$ is {\it biLipschitz equivalent} to $d_2$ if there exists a constant $C\geq 1$ such that for all $x, y\in Z$,
\begin{equation*}
C^{-1}d_1(x, y)\leq d_2(x, y)\leq Cd_1(x, y).
\end{equation*}

\subsection{Gromov hyperbolic spaces}\label{sec-3-1}

Let $(X, d_X)$ be a metric space.
let $x, y, o$ be three points in $X$. The {\it Gromov product} of $x$ and $y$ with respect to  $o$ is defined as
$$
(x|y)_{o}=\frac{1}{2}(d_X(x,o)+d_X(y,o)-d_X(x,y)).
$$
Then for all points $x, y, o, o^{\prime}\in X$, we have
\begin{equation}\label{eq-base-1}
\big|(x|y)_{o}-(x|y)_{o^{\prime}}\big|\leq d_X(o,o^{\prime}).
\end{equation}

A metric space $(X, d_X)$ is called {\it Gromov $\delta$-hyperbolic} for some $\delta\geq 0$ if for all points $x, y, z, o\in X$,
$$
(x|y)_{o}\geq(x|z)_{o}\wedge(z|y)_{o}-\delta.
$$
Here, $a\vee b$ $($resp. $a\wedge b$$)$ denote the maximum $($resp. the minimum$)$ of $a, b\in \overline{\mathbb{R}}=\mathbb{R}\cup \{\infty\}$.
Sometimes, we briefly say that $X$ is {\it Gromov hyperbolic}.

In the rest of this section, we assume that $X$ is a Gromov $\delta$-hyperbolic metric space for some $\delta\geq 0$ and assume that  $o\in X$ is a fixed point.

A sequence of points $\{x_{i}\}\subset X$ is said to {\it converge to infinity} if
$$
(x_{i}|x_{j})_{o}\to\infty \;\; \mbox{as}\;\; i, j\to\infty.
$$
Two sequences $\{x_{i}\}$ and $\{y_{i}\}$ that converge to infinity are said to be {\it equivalent} if
$$
(x_{i}|y_{i})_{o}\to\infty \;\; \mbox{as}\;\; i\to\infty.
$$
This defines an equivalence relation for sequences in $X$ converging to infinity.
The convergence to infinity of a sequence and the equivalence of two sequences do not depend on the choice of $o$ because of \eqref{eq-base-1}.

The {\it Gromov boundary} $\partial_{G}X$ of $X$ is defined as the set of all equivalence classes of sequences converging to infinity.  For a point  $\omega\in\partial_{G}X$ and a sequence $\{x_n\}$ converging to infinity, we say that $\{x_n\}$ {\it converges to} $\omega$ or $\{x_n\}\in \omega$, if $\{x_n\}$ belongs to the equivalence class of $\omega$.

Let $\xi\in \partial_{G}X$, and let $y\in X$. The Gromov product $(y|\xi)_o$  with respect to  $o$  is defined as follows:
\begin{equation}\label{eq-Gro}
(\xi|y)_o=(y|\xi)_o=\inf\left\{\liminf_{i\to\infty}(x_i|y)_o:\; \{x_i\}\in\xi\right\},
\end{equation}
and for $\zeta, \xi\in\partial_{G}X$, define
$$
(\zeta|\xi)_o=\inf\left\{\liminf_{i\to\infty}(x_i|y_i)_o:\; \{x_i\}\in\zeta\;\;\mbox{and}\;\; \{y_i\}\in\xi\right\}.
$$

For any $\varepsilon>0$, we define a quasimetric on $\partial_G X$ as follows: For any $\zeta, \xi\in\partial_G X$, define
\begin{equation}\label{v-o}
\vartheta_{\varepsilon, o}(\zeta, \xi)=e^{-\varepsilon(\zeta|\xi)_{o}}.
\end{equation}
A metric $d$ is a {\it visual metric} on $\partial_G X$ with parameter $\varepsilon$ if $d$ is biLipschitz equivalent to $\vartheta_{\varepsilon, o}$.
The notion of visual metrics based at $o$ does not depend on the choice of $o$ in $X$ by \eqref{eq-base-1}. Obviously, for any visual metrics $d_1$, $d_2$, the identity mapping ${\rm id}: (\partial_G X, d_1)\to (\partial_G X, d_2)$ is a snowflake mapping.

The boundary $\partial_G X$ equipped with any visual metric is complete and bounded, cf. \cite[Propsition 6.2]{BSC}.

For any $\omega\in \partial_G X$, define the {\it Busemann function} $b_{\omega, o}$:  $X\to \mathbb R$ by
\begin{equation}\label{Busemann}
b_{\omega, o}(x):=(\omega|o)_x-(\omega|x)_o,
\end{equation}
where $(\omega|o)_x$ and $(\omega|x)_o$ are Gromov products given in \eqref{eq-Gro}.


 \begin{defn}
 Let $\omega\in \partial_G X$. The set $\mathcal B(\omega)$ consists of all Busemann functions $b: X\to \mathbb R$ for which there are $o^\prime\in X$ and a constant $c\in\mathbb R$ with
$$
|b(x)-b_{\omega, o^\prime}(x)|\leq c, \ \ \forall \ x\in X.
$$
 \end{defn}

For any $b\in \mathcal{B}(\omega)$, define the {\it Gromov product $(x|y)_b$ based at $b$} for $x, y\in X$ by
$$
(x|y)_b=\frac{1}{2}(b(x)+b(y)-d_X(x,y)).
$$
For $\xi\in \partial_G X\setminus\{\omega\}$ and $y\in X$, define
$$
(\xi|y)_b=(y|\xi)_b=\inf\left\{\liminf_{i\rightarrow\infty}(x_i|y)_b:\; \{x_i\}\in\xi\right\},
$$
and for $\zeta, \xi\in \partial_G X\setminus\{\omega\}$, define
$$
(\zeta|\xi)_b=\inf\left\{\liminf_{i\rightarrow\infty}(x_i|y_i)_b:\; \{x_i\}\in\zeta\;\;\mbox{and}\;\; \{y_i\}\in\xi\right\}.
$$
It is known that given any Busemann functions $b_1$, $b_2\in \mathcal B(\omega)$,  there is a constant $c\in\mathbb R$ so that
\begin{equation}\label{differ-b}
|(x|y)_{b_1}-(x|y)_{b_2}|\leq c
\end{equation}
for all $x, y\in X\cup (\partial_G X\setminus\{\omega\})$ (cf. \cite[Section 3.1]{BuSc}).

Fix a function $b\in \mathcal{B}(\omega)$.  A sequence $\{x_n\}$ {\it converges to infinity with respect to $\omega$} if
$$
(x_m|x_n)_b\to  \infty \;\; \mbox{as}\;\; m, n\to  \infty,
$$
and two sequences $\{x_n\}$ and $\{y_n\}$ converging to infinity with respect to $\omega$ are {\it equivalent} if
$$
(x_n|y_n)_b\to  \infty \;\; \mbox{as}\;\;n\to  \infty.
$$
This defines an equivalence relation for sequences in $X$ converging to infinity with respect to $\omega$, see \cite[Section 3.4.1]{BuSc}.
The notations are both independent of the choice of the Busemann function $b$ based at $\omega$ because of \eqref{differ-b}.

The {\it Gromov boundary relative to} $\omega$ is defined as the set
$\partial_{\omega}X$ of all equivalence classes of sequences converging to infinity with respect to $\omega$. For $\zeta\in\partial_{\omega}X$ and a sequence $\{x_n\}$ that converges to infinity with respect to $\omega$, we say that $\{x_n\}$ {\it converges to $\zeta$ with respect to $\omega$}, if $\{x_n\}$ belongs to the equivalence class of $\zeta$.


\begin{lem}[{\cite[Lemma 3.2.4]{BuSc}}]\label{lem-fun-b}
Let $b\in \mathcal B(\omega)$. Then \ben

\item[$(1)$]
for any $\xi, \zeta\in\partial_G X\setminus\{\omega\}$ and any $\{x_n\}\in\xi$, $\{y_n\}\in\zeta$, we have
\begin{equation}\label{Lim}
(\zeta|\xi)_b\leq \liminf_{i\to \infty}(x_i|y_i)_b\leq \limsup_{i\to \infty}(x_i|y_i)_b\leq (\zeta|\xi)_b+600\delta,
\end{equation}
and the same holds if we replace $\zeta$ with $x\in X$.

\item[$(2)$]\label{lem-fun-b-2}
for any $\xi, \zeta, \eta\in X\cup (\partial_G X\setminus\{\omega\})$, we have
\begin{equation}\label{3-ponits}
(\xi|\zeta)_b\geq (\xi|\eta)_b \wedge (\eta|\zeta)_b-600\delta.
\end{equation}
\een
\end{lem}

\begin{prop}[{\cite[Proposition 3.4.1]{BuSc}}]\label{prop-Gromov}
A sequence $\{x_n\}$ converges to infinity with respect to $\omega$ if and only if $\{x_n\}$ converges to a point $\xi\in \partial_G X\setminus\{\omega\}$. This correspondence defines an identification of $\partial_\omega X$ and $\partial_G X\setminus\{\omega\}$.
\end{prop}

Under this identification, we shall thus use the notation $\partial_{\omega}X$ instead of $\partial_G X\setminus\{\omega\}$ throughout the rest of this paper.

For any $\varepsilon>0$ and any $b\in \mathcal B(\omega)$, define a function $\vartheta_{\varepsilon, b}$ on $\partial_{\omega}X$ by letting
\begin{equation}\label{visual-b}
\vartheta_{\varepsilon, b}(\zeta, \xi)=e^{-\varepsilon(\zeta|\xi)_{b}}, \ \ \  \forall \ \zeta, \xi\in\partial_{\omega}X.
\end{equation}
In general, $\vartheta_{\varepsilon, b}$ does not define a metric.
A metric $d$ is a {\it visual metric} on $\partial_{\omega}X$ based at $\omega$ with parameter $\varepsilon$ if
$d$ is biLipschitz equivalent to $\vartheta_{\varepsilon, b}$.
The notion of visual metrics on $\partial_{\omega}X$ does not depend on the choice of $b\in \mathcal B(\omega)$.

Using the similar argument with \cite[Propsition 6.2]{BSC}, we know that the boundary $\partial_\omega X$ equipped with any visual metric based at $\omega$ is complete.

\begin{lem}\label{lem-eq-b}
Let $\omega\in \partial_G X$, and let $b\in \mathcal B(\omega)$. Assume that $p, q \in X\cup \partial_\omega X$ and $x, y\in X$. If there is a constant $c\geq 0$ such that
\beq\label{fri-1}
(p|x)_b\geq b(x)-c\;\;\mbox{and}\;\;(q|y)_b\geq b(y)-c,
\eeq then
$$
d_X(x,y)=b(x)+b(y)-2(b(x)\wedge (p|q)_b \wedge b(y))+O_{\delta, c}(1).
$$
\end{lem}

\begin{proof}
Let $\omega\in \partial_G X$, and let $b\in \mathcal B(\omega)$.
It is known from \cite[Proposition 3.1.5]{BuSc} that for any $u, v\in X$,
$$|b(u)-b(v)|\leq d_X(u,v)+10\delta,$$
which shows that
\beq\label{thurs-1}
(u|v)_b\leq b(u)\wedge b(v)+5\delta.
\eeq

Let $p, q \in X\cup \partial_\omega X$ and $x, y\in X$. We infer from Lemma \ref{lem-fun-b}$(2)$ that
\begin{align*}
(p|q)_b &\geq (p|x)_b\wedge (x|q)_b-600\delta
             \geq (p|x)_b\wedge (x|y)_b \wedge (y|q)_b-1200\delta\\
              &\geq b(x)\wedge (x|y)_b \wedge b(y)-1200\delta-c,
\end{align*}
where, in the last inequality, the inequalities in \eqref{fri-1} are applied.
Then \eqref{thurs-1} gives
$$b(x)\wedge (p|q)_b \wedge b(y)\geq (x|y)_b-1205\delta-c.$$

Again, we deduce from  Lemma \ref{lem-fun-b}$(2)$ and \eqref{fri-1} that
\begin{align*}
(x|y)_b &\geq (x|p)_b\wedge (p|y)_b-600\delta
\geq (x|p)_b\wedge (p|q)_b \wedge (q|y)_b-1200\delta\\
&\geq  b(x)\wedge (p|q)_b \wedge b(y)-1200\delta-c.
\end{align*}
These imply that
$$
(x|y)_b = b(x)\wedge (p|q)_b \wedge b(y)+O_{\delta, c}(1),
$$
and so, we have
$$d_X(x,y)=b(x)+b(y)-2(b(x)\wedge (p|q)_b \wedge b(y))+O_{\delta, c}(1),$$ which is what we need.
\end{proof}

Assume that  $X$ contains a geodesic ray $\gamma: [0, \infty)\to X$. Observe that, there is a point $\omega\in \partial_G X$ such that all sequences $\{\gamma(t_n)\}$ with $t_n\to\infty$ belong to $\omega$.
We say $\omega$ the {\it endpoint} of $\gamma$.
 Define the function $b_\gamma$: $X\to \mathbb{R}$ associated to $\gamma$ by the limit
\begin{equation}\label{def-br}
b_{\gamma}(x)=\lim_{t\to \infty} \big(d_X(\gamma(t), x)-t\big).
\end{equation}
By \cite[Lemma 2.2.2 and Lemma 3.1.2]{BuSc}, for any $x\in X$,
$$
b_{\gamma}(x)=b_{\omega, \gamma(0)}(x)+O_\delta(1).
$$
Then $b_{\gamma}\in \mathcal{B}(\omega)$.

\section{Infinite hyperbolic cones and hyperbolic fillings}\label{sec-3}

\subsection{Infinite hyperbolic cones}

Let $(Z, d_Z)$ be a  metric space. As we know, $(\text{Con}_h(Z), \rho_h)$ is a Gromov $\delta$-hyperbolic metric space for some $\delta=O(1)\geq 0$, where $\text{Con}_h(Z)$ and $\rho_h$ are defined as in \eqref{11-16-1} and in \eqref{metric-cone}, respectively.
In the following, we give some properties of $\text{Con}_h(Z)$.

Define a mapping
\begin{equation*}
R_z: \mathbb R\to {\rm Con}_h(Z), \ \ t\mapsto (z, e^t).
\end{equation*}
Obviously, as a set in ${\rm Con}_h(Z)$, $R_z$ is given by
\begin{equation}\label{def-Rz}
	R_z=\{z\}\times(0, +\infty).
\end{equation}
A direct calcultion gives that
$$
\rho_h(R_z(t_1), R_z(t_2))=|t_2-t_1|.
$$
Then $R_z$: $\mathbb R\to {\rm Con}_h(Z)$ is an isometry, and so, $R_z$ is a geodesic in ${\rm Con}_h(Z)$.



\begin{lem}\label{union}
There exists $\omega\in \partial_G {\rm Con}_h(Z)$ such that for any sequence $\{x_n=(z_n, h_n)\}$ in ${\rm Con}_h(Z)$, if
$$
\lim_{n\to\infty}h_n=\infty,
$$
then $\{x_n\}\in \omega$.
\end{lem}

\begin{proof}
Fix a point $o=(z, h)\in {\rm Con}_h(Z)$. Let $x_n=(z_n, h_n)$ with
$$
\lim_{n\to\infty}h_n=\infty.
$$
Without loss of generality, assume that $h_n\geq h$ for all $n$. Then
\begin{align}\label{eq-23}
(x_n|x_m)_o =& \frac{1}{2}\big(\rho_h(x_n, o)+\rho_h(x_m, o)-\rho_h(x_n, x_m)\big)\notag\\
=& \log\frac{(d_Z(z_n, z)+h_n)(d_Z(z_m, z)+h_m)}{(d_Z(z_n, z_m)+h_n\vee h_m)h}.
\end{align}

In the following, we show that $(x_n|x_m)_o\to\infty$ as $m, n\to\infty$. It suffices to consider two cases by symmetry:  $d_Z(z_n, z)\geq d_Z(z_m, z)$, $h_n\geq h_m$; and $d_Z(z_m, z)\geq d_Z(z_n, z)$, $h_n\geq h_m$.

For the former case, that is, if $d_Z(z_n, z)\geq d_Z(z_m, z)$ and $h_n\geq h_m$, then
$$d_Z(z_n, z_m)\leq 2d_Z(z_n, z)$$
and
\begin{align*}
(x_n|x_m)_o
\geq \log\frac{(d_Z(z_n, z)+h_n)(d_Z(z_m, z)+h_m)}{(2d_Z(z_n, z)+h_n)h}
\geq \log(d_Z(z_m, z)+h_m)-\log(2h).
\end{align*}
This shows that $(x_n|x_m)_o\to \infty$ as $m, n\to\infty$.

For the case $d_Z(z_m, z)\geq d_Z(z_n, z)$ and $h_n\geq h_m$, we have
$$d_Z(z_n, z_m)\leq 2d_Z(z_m, z),$$
and then
\begin{align*}
(x_n|x_m)_o
&\geq \log\frac{(d_Z(z_n, z)+h_n)(d_Z(z_m, z)+h_m)}{h(2d_Z(z_m, z)+h_n)}\\
&\geq \log\frac{(d_Z(z_n, z)+h_n)(d_Z(z_m, z)+h_m)}{d_Z(z_m, z)+h_n}-\log(2h).
\end{align*}
Note that
\begin{align*}
\frac{d_Z(z_m, z)+h_n}{(d_Z(z_n, z)+h_n)(d_Z(z_m, z)+h_m)}
&\leq \frac{d_Z(z_m, z)+h_n}{h_n(d_Z(z_m, z)+h_m)}\\
&\leq \frac{1}{h_n}+\frac{1}{d_Z(z_m, z)+h_m}\to 0
\end{align*}
as  $m, n\to \infty$.
This shows that $(x_n|x_m)_o\to \infty$ as $m, n\to\infty$.

As a result, there is a point $\omega\in \partial_G {\rm Con}_h(Z)$ such that $\{x_n\}$ converges to it.
Next, we show that such point $\omega$ is unique.

Let $\{y_n\}$ be any sequence in ${\rm Con}_h(Z)$ with $y_n=(z_n^\prime, t_n)$ and
$\lim_{n\to\infty}t_n=\infty$.
Without loss of generality, we assume that $\min\{h_n, t_n\}\geq h$ for all $n$. Then
\begin{align*}
(x_n|y_n)_o
= \log\frac{(d_Z(z_n^\prime, z)+h_n)(d_Z(z_n, z)+t_n)}{(d_Z(z_n^\prime, z_n)+h_n\vee t_n)h}.
\end{align*}
Using the same argument with estimating \eqref{eq-23}, we see that
$$(x_n|y_n)_o\to \infty\;\;\mbox{ as}\;\;n\to\infty.$$
This shows that $\{x_n\}$ and $\{y_n\}$ are equivalent, and thus, $\{y_n\}$ converges to $\omega$ as well. This shows that the lemma is true.
\end{proof}
We make a notational convention. In the rest of this section, we use $\omega$ to denote the equivalence class of all sequences $\{x_n=(z_n,h_n)\}$ in ${\rm Con}_h(Z)$ with
$
\lim_{n\to\infty}h_n=\infty.
$

Let $z\in Z$, and let $\gamma=R_{z}|_{[0, \infty)}$. Note that $\omega$ is the endpoint of $\gamma$, and $b_\gamma$ defined by \eqref{def-br} is a Busemann function based at $\omega$.
For any $x=(z^\prime, h)\in {\rm Con}_h(Z)$, a direct calculation gives
\begin{equation}\label{esti-b}
b_{\gamma}(x)= \lim_{t\to \infty}(\rho_h(x, \gamma(t))-t)
= \lim_{t\to \infty}\left(2\log\frac{d_Z(z^\prime, z)+h \vee e^t}{\sqrt{h e^t}}-t\right)
=-\log h.
\end{equation}




\begin{lem}\label{11-12}
Let $z\in Z$. There exists a point $\xi\in \partial_\omega {\rm Con}_h(Z)$ such that for any sequence $\{x_n=(z,h_n)\}$ in $R_z$, if
$$
\lim_{n\to\infty}h_n=0,
$$
then $\{x_n\}$ converges to $\xi$ with respect to $\omega$.
\end{lem}

\begin{proof}
Let $z\in Z$, and let $\gamma=R_z|_{[0, \infty)}$. Assume that $\{x_n=(z,h_n)\}$ is a sequence in $R_z$ such that
$\lim_{n\to\infty}h_n=0$.
It follows from \eqref{esti-b} that
\begin{align*}
(x_n |x_m)_{b_\gamma}=& \frac{1}{2}\big(b_{\gamma}(x_n)+b_{\gamma}(x_m)-\rho_h(x_n, x_m)\big)\\
=& \frac{1}{2}\big(-\log h_m-\log h_m-2\log\frac{h_n\vee h_m}{\sqrt{h_nh_m}}\big)
= -\log(h_n\vee h_m),
\end{align*}
which shows that
$$(x_n | x_m)_{b_\gamma}\to \infty\;\;\mbox{ as}\;\; m, \;n\to \infty.$$
Then $\{x_n\}$ converges to $\xi$ with respect to $\omega$ for some $\xi\in \partial_\omega {\rm Con}_h(Z)$.

Let $\{y_n=(z, t_n)\}$ be a sequence in $R_z$ such that
$\lim_{n\to\infty}t_n=0$.
By using \eqref{esti-b} again, we see that
$$(x_n | y_n)_{b_\gamma}\to \infty\;\;\mbox{ as}\;\; m \to \infty.$$
This shows that $\{x_n\}$ and $\{y_n\}$ are equivalent. This completes the proof of the lemma.
\end{proof}

\subsection{Hyperbolic fillings}

For a metric space $(Z, d_Z)$, we always denote the sets $B(z, r)$ and $\tau B(z, r)$ by
$$
B(z, r)=\{y\in Z : d_Z(y, z)<r\}
$$
and
$$
\tau B(z, r)=\{y\in Z : d_Z(y, z)<\tau r\}
$$
for any $z\in Z$, $r>0$ and $\tau>0$.

Let $(Z, d_Z)$ be a complete metric space. Let $\alpha>10$ and $\tau>3$.  We construct a {\it hyperbolic filling} ${\rm Hyp}(Z)$ of $Z$ as follows.  For each $n\in\mathbb Z$, we select a maximal $\alpha^{-n}$-separated subset $S_n$ of $Z$.  Also, we choose $S_n$ such that $S_m\subset S_n$ whenever $m\geq n$. The existence of such sets is guaranteed by a standard application of Zorn's lemma.
Then for each $n\in\mathbb Z$, the sets $B(z, \alpha^{-n})$, $z\in S_n$, cover $Z$.
The vertex set has the form
$$
V=\bigcup_{n\in\mathbb Z}V_n,
$$
where $V_n=\{(z, n):\; z\in S_n\}.$

To each vertex $v=(z, n)$, we associate the dilated ball $B(v):=\tau B(z, \alpha^{-n})$.
We also define the {\it height
function} $h: V\rightarrow \mathbb Z$ by $h(z, n)=n$.

%

Given two different vertices $v, w\in V$, we say that $w$ is a {\it neighbor} of $v$, denoted by $w\sim v$,
when
$(i)$
$h(v)=h(w)$ and $B(v)\cap B(w)\neq \emptyset$, or
$(ii)$
$h(v)=h(w)+1$ and $B(v)\subset B(w)$.

Define the hyperbolic filling ${\rm Hyp}(Z)$ of $Z$ to be the graph formed by the vertex set $V$ together with the above neighbor relation (edges). Also, we say that $\alpha$ and $\tau$ are the {\it construction parameters} of ${\rm Hyp}(Z)$.

We consider ${\rm Hyp}(Z)$ to be a metric graph, where the edges are unit intervals.
The graph distance between two points $x, y \in {\rm Hyp}(Z)$, denoted by $|x-y|$, is the length of the shortest curve connecting them.

If $(Z, d_Z)$ is  bounded, then the largest integer $k$ with $\diam Z<\alpha^{-k}$ exists, and we denote it by $k_0$. For every $k\leq k_0$, the vertex set $V_k$ consists of one point. We modify the graph ${\rm Hyp}(Z)$ by putting $V_k=\emptyset$ for every $k<k_0$, and call the modified graph the {\it truncated hyperbolic filling} of $Z$, denoted by ${\rm Hyp}_T(Z)$.

For any edge $[v, w]$ in ${\rm Hyp}(Z)$, we extend the height function $h$ to $[v, w]$ by
$$
h(x)=th(w)+(1-t)h(v)
$$
for $x\in [v, w]$ with $|x-v|=t\in[0, 1]$.

The parameters $\alpha>10$ and $\tau>3$ ensure that
$$
\tau\alpha^{-n}\geq 2\tau\alpha^{-n-1}+\alpha^{-n}.
$$
Then for any $v=(z_1, n)\in V_n$ and $w=(z_2, n+1)\in V_{n+1}$, if
$$B(z_1, \alpha^{-n})\cap B(z_2, \alpha^{-n-1})\neq \emptyset,$$
then $B(w)\subset B(v)$ and $w\sim v$.

For any $z\in Z$, there is a geodesic $\gamma_z:\mathbb R\rightarrow {\rm Hyp}(Z)$ such that
$h(\gamma_z(t))=t$ and
\begin{equation}\label{an-z}
z\in B(z_m, \alpha^{-m})
\end{equation}
for any vertex $(z_m, m)\in\gamma_z$. Such $\gamma_z$ may be not unique, in the following, we fix one such geodesic, still denote $\gamma_z$.

Obviously, there is a point $\xi(z)\in \partial_G {\rm Hyp}(Z)$ such that $\xi(z)$ is the endpoint of $\gamma_z|_{[0, \infty)}$. By using the similar argument with  \cite[Lemma 5.11]{BuC}, there exists a point $\hat\omega\in \partial_G {\rm Hyp}(Z)$ such that $\hat\omega$ is the endpoint of $\gamma_z|_{(-\infty, 0]}$ for any $z\in Z$.
Note that $\xi(z)$ and $\hat\omega$ are both independent of the choice of $\gamma_z$ and $\xi(z)\neq \hat\omega$ for any $z\in Z$, see \cite[Section 5]{BuC}.


Define a mapping $\phi: Z\to \partial_{\hat\omega} {\rm Hyp}(Z)$ by letting
$$\phi(z)=\xi(z)$$
for any $z\in Z$.
The following result shows there is an identification of $Z$ and
$\partial_{\hat \omega} {\rm Hyp}(Z)$ when $(Z, d_Z)$ is a complete metric space. See \cite[Theorem 3]{Jordi}, \cite[Proposition 5.13]{BuC} and \cite[Theorem 6.3.1, 6.3.4]{BuSc}.

\begin{thm}\label{Thm-A}
Let $(Z, d_Z)$ be a complete metric space. The hyperbolic filling ${\rm Hyp}(Z)$ is a visual geodesic Gromov $\delta$-hyperbolic space with $\delta=\delta(\alpha, \tau)$, and the mapping
$\phi: Z\to \partial_{\hat\omega} {\rm Hyp}(Z)$
defines an identification of $Z$ with $\partial_{\hat\omega}{\rm Hyp}(Z)$.
Under this identification, $d_Z$ defines a visual metric on $\partial_{\hat\omega} {\rm Hyp}(Z)$ such that $d_Z$ is biLipschitz equivalent to $\alpha^{-(\cdot|\cdot)_b}$ for any Busemann function $b$ based at $\hat\omega$.

Moreover, if $(Z, d_Z)$ is bounded, then the truncated hyperbolic filling ${\rm Hyp}_T(Z)$ is also a visual geodesic Gromov $\delta$-hyperbolic space with $\delta=\delta(\alpha, \tau)$, and there is an identification
$$\partial_G {\rm Hyp}_T(Z)=Z$$
of sets. For any $o\in {\rm Hyp}_T(Z)$, $d_Z$ is biLipschitz equivalent to $\alpha^{-(\cdot|\cdot)_o}$.
\end{thm}

Let $z\in Z$, and let $\gamma:=\gamma_z((-\infty, 0])$ be a geodesic ray with $h(\gamma(0))=0$. It follows from \cite[Lemma 5.12]{BuC} that
\begin{equation}\label{eq-bh-12}
|b(x)-h(x)|\leq 3
\end{equation}
for any $x\in {\rm Hyp}(Z)$,
where $b:=b_\gamma$ is the Busemann function associated to $\gamma$.

For $x, y\in {\rm Hyp}(Z)\cup \partial_{\hat\omega} {\rm Hyp}(Z)$, we define the {\it Gromov product $(x|y)_h$ based at the height function $h$} to be the same as the Gromov product $(x|y)_b$ by replacing the Busemann function $b$ with the height function $h$. More precisely, for any $x, y\in {\rm Hyp}(Z)$,
\beq\label{11-4-1}
(x|y)_h=\frac{1}{2}(h(x)+h(y)-|x-y|).
\eeq

For $\xi\in \partial_{\hat\omega}{\rm Hyp}(Z)$ and $y\in {\rm Hyp}(Z)$, we define
$$
(\xi|y)_h=(y|\xi)_h=\inf\left\{\liminf_{i\rightarrow\infty}(x_i|y)_h:\; \{x_i\}\in\xi\right\},
$$
and for $\zeta, \xi\in\partial_{\hat\omega}{\rm Hyp}(Z)$,
$$
(\zeta|\xi)_h=\inf\left\{\liminf_{i\rightarrow\infty}(x_i|y_i)_h:\; \{x_i\}\in\zeta, \{y_i\}\in\xi\right\}.
$$

For all $x, y\in {\rm Hyp}(Z)\cup \partial_{\hat\omega}{\rm Hyp}(Z)$,
\begin{equation}\label{eq-bh}
|(x|y)_h-(x|y)_b|\leq 3.
\end{equation}
This follows from \eqref{eq-bh-12} and the inequality $$|(x|y)_h-(x|y)_b|\leq (|b(x)-h(x)|+|b(y)-h(y)|)/2.$$

Now let us discuss the relationship between ${\rm Con}_h(Z)$ and ${\rm Hyp}(Z)$.
For this,  we construct a mapping $\sigma$ from ${\rm Con}_h(Z)$ to ${\rm Hyp}(Z)$.
	Define
	$
	\sigma: {\rm Con}_h(Z)\rightarrow {\rm Hyp}(Z)
	$
 as follows: For all $(z, s)\in {\rm Con}_h(Z),$
	\begin{equation}\label{f-con}
		\sigma(z, s)=\gamma_{z}\left(\frac{\log\frac{1}{s}}{\log\alpha}\right).
	\end{equation}
Obviously, $\sigma$ maps each ray $R_{z}$ onto each geodesic $\gamma_{z}$.
For any $(z, s)\in {\rm Con}_h(Z)$,
\begin{equation}\label{sigma-h}
h(\sigma(z, s))=\frac{\log\frac{1}{s}}{\log\alpha}.
\end{equation}
For $s_1, s_2\in(0, \infty)$, if $s_1>s_2$, then $h(\sigma(z, s_1))< h(\sigma(z, s_2))$.

The following result shows that the mapping $\sigma$ defined as above is a rough similarity.

\begin{thm}\label{thm-2.9}
Let $(Z, d_Z)$ be a complete metric space.
The mapping $\sigma: {\rm Con}_h(Z)\rightarrow {\rm Hyp}(Z)$ is a $(1/\log\alpha, C)$-rough similarity with $C=C(\alpha, \tau)$.
\end{thm}

\begin{proof}
Recall that
$$\partial_{\hat \omega} {\rm Hyp}(Z)=Z$$
as sets, and $d_Z$ is a visual metric on $\partial_{\hat \omega} {\rm Hyp}(Z)$ with parameter $\log\alpha$ by Theorem \ref{Thm-A}.

For convenience, we use $z\in Z$ to denote $\xi(z)\in \partial_{\hat \omega}{\rm Hyp}(Z)$. That is, $\xi(z):=z$.
Then there is a $C=C(\alpha,\tau)\geq 1$ such that for any $z, z^\prime\in Z$,
$$
C^{-1}\alpha^{-(z|z^\prime)_b}\leq d_Z(z, z^\prime)\leq C\alpha^{-(z|z^\prime)_b}.
$$
It follows from \eqref{eq-bh} that
\begin{equation}\label{eq-metric-c}
C^{-1}\alpha^{-(z|z^\prime)_h}\leq d_Z(z, z^\prime)\leq C\alpha^{-(z|z^\prime)_h}.
\end{equation}


For any $(z, s)\in {\rm Con}_h(Z)$,  we know that $\sigma(z, s)\in \gamma_{z}$ and $z$ is the endpoint of $\gamma_{z}|_{[0,\infty)}$. So, $\gamma_{z}$ is a geodesic from $\hat\omega$ to $z$.
We claim that
\begin{equation}\label{eq-58}
(z|\sigma(z, s))_h=h(\sigma(z, s))+O_{\delta}(1).
\end{equation}

 Let $\{x_n\}\in z$ be a sequence in $\gamma_{z}$. Then $h(x_n)\to \infty$ as $n\to \infty$.
	Recall that ${\rm Hyp}(Z)$ is Gromov  $\delta$-hyperbolic for some $\delta=\delta(\alpha, \tau)>0$. By Lemma \ref{lem-fun-b} and \eqref{eq-bh}, we see that
	\begin{align}\label{eq-zf}
		(z|\sigma(z, s))_h-6\leq& \liminf_{n\rightarrow+\infty}(x_n|\sigma(z, s))_h\notag\\
		\leq& \limsup_{n\rightarrow+\infty}(x_n|\sigma(z, s))_h\leq (z|\sigma(z, s))_h+600\delta+6.
	\end{align}
Since both $x_n$ and $\sigma(z, s)$ are contained in $\gamma_{z}$,
if $h(x_n)\geq h(\sigma(z, s))$, then
	\begin{equation*}
		(x_n|\sigma(z, s))_h=h(\sigma(z, s)),
	\end{equation*}
which, together with \eqref{eq-zf}, gives
\begin{equation*}
(z|\sigma(z, s))_h=h(\sigma(z, s))+O_{\delta}(1).
\end{equation*}
Then \eqref{eq-58} is true.

For any $(z, s)$ and $(z^\prime, s^\prime)$ in ${\rm Con}_h(Z)$, we infer from \eqref{eq-58} that
$$
(z|\sigma(z, s))_h\geq h(\sigma(z, s))-C(\delta) \ \ \ \text{ and } \ \ \ (z^\prime|\sigma(z^\prime, s^\prime))_h\geq h(\sigma(z^\prime, s^\prime))-C(\delta),
$$
and so, Lemma \ref{lem-eq-b} and \eqref{eq-bh} lead to
		\begin{align*}
		|\sigma(z, s)-\sigma(z^\prime, s^\prime)|=& h(\sigma(z, s))+h(\sigma(z^\prime, s^\prime))\\
&-2\big(h(\sigma(z, s)) \wedge (z|z^\prime)_h \wedge h(\sigma(z^\prime, s^\prime))\big)+O_{\alpha_Z, \tau_Z}(1).
		\end{align*}
It follows from \eqref{sigma-h} that
		\begin{align*}
		|\sigma(z, s)-\sigma(z^\prime, s^\prime)|
		=& -\frac{\log s+\log s^\prime}{\log\alpha }-2\left(\frac{-\log s}{\log\alpha } \wedge (z|z^\prime)_h \wedge \frac{-\log s^\prime}{\log\alpha }\right)+O_{\alpha , \tau }(1)\\
		=& \frac{2}{\log\alpha}\log\frac{\alpha^{-(z|z^\prime)_h}\vee s \vee s^\prime}{\sqrt{ss^\prime}}+O_{\alpha, \tau}(1),
	\end{align*}
and thus, we conclude from \eqref{metric-cone} and \eqref{eq-metric-c} that
	\begin{align*}
	|\sigma(z, s)-\sigma(z^\prime, s^\prime)|=& \frac{2}{\log\alpha}\log\frac{\alpha^{-(z|z^\prime)_h}+s \vee s^\prime}{\sqrt{ss^\prime}}+O_{\alpha, \tau}(1)\\
	=& \frac{2}{\log\alpha}\log\frac{d_Z(z, z^\prime)+s \vee s^\prime}{\sqrt{ss^\prime}}+O_{\alpha, \tau}(1)\\
	=& \frac{1}{\log\alpha} {\rho_h}((z, s), (z^\prime, s^\prime))+O_{\alpha, \tau}(1).
\end{align*}
	
To prove the rough similarity of $\sigma$, it remains to check that the image of ${\rm Con}_h(Z)$ under $\sigma$ is cobounded in ${\rm Hyp}(Z)$. For this, let
	$x\in {\rm Hyp}(Z)$. Then there exists a vertex $v=(z, n)\in V$ such that $|x-v|\leq1$.
Note that $\gamma_{z}(n)\in V_n$ for every $n\in\mathbb{Z}$ and $z\in B(\gamma_{z}(n))$ by \eqref{an-z}.
This ensures that
$$z\in B(v)\cap B(\gamma_{z}(n))$$
 for all $n\in\mathbb{Z}$.
Then $v\sim \gamma_z(m)$ with $m=h(v)$, and so, $\dist(\{v\}, \gamma_{z})\leq 1$.
	We deduce from the fact
	$$\dist(\{x\}, \gamma_{z})\leq |x-v|+\dist(\{v\}, \gamma_{z})$$ that
$$\dist(\{x\}, \gamma_{z})\leq  2.$$
Since $\sigma$ maps $R_{z}\subset {\rm Con}_h(Z)$ onto $\gamma_{z}$ for any $z\in Z$, the image of ${\rm Con}_h(Z)$ under $\sigma$ is $2$-cobounded in ${\rm Hyp}(Z)$.
Therefore, the theorem is proved.
\end{proof}

\section{Proof of Theorem \ref{thm-1}}\label{extension}

In this section, we prove Theorem \ref{thm-1}. Before the proof, let us do some preparation.

For a metric space $(Z,d_Z)$ and a point $x\in Z$, recall that
\begin{equation*}
	R_x=\{x\}\times(0, +\infty).
\end{equation*}

 Assume that $f$ is a $(\theta, \lambda)$-power quasi-symmetry between two metric spaces $(Z, d_Z)$ and $(W, d_W)$, where $\theta\geq 1$ and $\lambda\geq 1$.
 Under this assumption, we are going to define a mapping $f_x$ acting on $R_x$ for each $x\in Z$.

For $x\in Z$, let $S_x\subset \mathbb Z$ denote the set of all $l\in \mathbb Z$ such that the set
	$$
	A_Z(x, l)=\{z\in Z: 2^{-l-1}<d_Z(z, x)\leq 2^{-l}\}
	$$
	is nonempty, where $\mathbb Z$ stands for the integer set. The set $S_x$ can be regarded as the scale spectrum of $Z$ at $x$. Obviously, $S_x$ is non-empty since $\diam Z>0$.
 Similarly, let $S_{x^\prime}$ denote the scale spectrum of $W$ at $x^\prime$, where $x^\prime=f(x)$, and $$A_W(x^\prime, l^\prime)=\{w\in W: 2^{-l^\prime-1}<d_W(w, x^\prime)\leq 2^{-l^\prime}\}.$$
	
	First, we define a function $\phi_x: S_x\to S_{x^\prime}$: For $l\in S_x$, let
	\begin{equation*}
		\phi_x(l)=\sup\big\{l^\prime:\; \mbox{there is}\;\,  y\in Z\;\, \mbox{such that}\;\, 2^{-l-1}<d_Z(y, x) \;\, \mbox{and}\;\,  y^\prime \in A_{W}(x^\prime, l^\prime)\big\},
	\end{equation*}
	where $y^\prime=f(y)$.

	For $l_1, l_2\in S_x$, if $l_1\leq l_2$, obviously, $\phi_x(l_1) \leq \phi_x(l_2)$. This implies that $\phi_x$ is non-decreasing on $S_x$.
	
\begin{lem}\label{eq-5-30}
Let $l\in S_x$. If $y\in A_Z(x, l)$, then there is $l^\prime\in S_{x^{\prime}}$ such that $y^\prime=f(y)\in A_{W}(x^{\prime}, l^{\prime})$ and
$$
|\phi_x(l)-l^\prime|\leq C(\theta, \lambda).
$$
\end{lem}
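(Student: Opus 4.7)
The plan is to prove the bound $|\phi_x(l)-l'|\leq C(\theta,\lambda)$ by splitting it into two one-sided inequalities. The upper bound $l'\leq \phi_x(l)$ is essentially immediate from the definition: since $y\in A_Z(x,l)$ satisfies $d_Z(y,x)>2^{-l-1}$, the scale $l'$ of $y'$ lies in the very set whose supremum defines $\phi_x(l)$. So the real work is in the reverse inequality $\phi_x(l)-l'\leq C(\theta,\lambda)$.

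For the reverse inequality, I would first argue that the supremum in the definition of $\phi_x(l)$ is actually attained by some witness $\widetilde y\in Z$, with $d_Z(\widetilde y,x)>2^{-l-1}$ and $\widetilde y':=f(\widetilde y)\in A_W(x',\phi_x(l))$. Since the set over which the sup is taken is a subset of $\mathbb Z$, it suffices to show that it is bounded above; this uses that $f$ is a homeomorphism, for if there were points $y_n$ with $d_Z(y_n,x)>2^{-l-1}$ and $f(y_n)\to x'$, then continuity of $f^{-1}$ would force $y_n\to x$, contradicting $d_Z(y_n,x)>2^{-l-1}$.

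With $\widetilde y$ in hand I would apply the power quasi-symmetry condition to the triple $(y,\widetilde y,x)$ with base point $x$:
$$\frac{d_W(f(y),f(x))}{d_W(f(\widetilde y),f(x))}\leq \eta_{\theta,\lambda}\!\left(\frac{d_Z(y,x)}{d_Z(\widetilde y,x)}\right).$$
Because $d_Z(y,x)\leq 2^{-l}$ and $d_Z(\widetilde y,x)>2^{-l-1}$, the ratio on the right is at most $2$, so $\eta_{\theta,\lambda}$ of it is bounded by $\lambda\cdot 2^{\theta}$. Combining this with $d_W(\widetilde y',x')\leq 2^{-\phi_x(l)}$ (upper end of the annulus $A_W(x',\phi_x(l))$) and $d_W(y',x')>2^{-l'-1}$ (lower end of the annulus $A_W(x',l')$) gives $\phi_x(l)-l'\leq 1+\theta+\log_2\lambda$, as required. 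The step I expect to need the most care is the attainment of the sup via the homeomorphism property of $f$; the rest is a direct calculation from the defining quasi-symmetry inequality at a convenient base point.
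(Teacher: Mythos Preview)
Your argument is correct and matches the paper's approach: pick a witness for the supremum defining $\phi_x(l)$ and apply the power quasi-symmetry inequality at base point $x$ to bound $\phi_x(l)-l'$. The paper differs only cosmetically, taking a near-witness $y_0$ with $|\phi_x(l)-l_0'|\le 1$ rather than proving attainment; your attainment step via continuity of $f^{-1}$ in fact fills a small gap, since the paper silently assumes $\phi_x(l)<\infty$.
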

Let us recall that, here and in the following, $C(\theta, \lambda)$ denotes a constant depending only on the given parameters $\theta$ and  $\lambda$. Although, in different occurrences, their values may be different, we still use the same notation.

\begin{proof}
Let $l\in S_x$ and $y\in A_Z(x, l)$. Then there must be a unique $l^{\prime}\in S_{x^\prime}$ such that $y^{\prime}\in A_{W}(x^{\prime}, l^{\prime})$, where $y^{\prime}=f(y)$.
Also, by the definition of $\phi_x(l)$, there exist a point $y_0\in Z$ and an integer $l_0^\prime\in S_{x^\prime}$ such that
\beq\label{sat-01}
	2^{-l-1}<d_Z(y_0, x),\;\; y_{0}^\prime \in A_{W}(x^\prime, l_0^\prime)  \;\; \text{ and } \;\;  |\phi_x(l)-l_0^\prime|\leq 1.
	\eeq
Subsequently, we have
\beq\label{sat-00}
l^\prime\leq \phi_x(l)\leq l_0^\prime+1.
\eeq

Since $f$ is a $(\theta, \lambda)$-power quasi-symmetry, the fact
$$d_Z(y, x)\leq 2^{-l}<2d_Z(y_0, x)$$ guarantees that
	$$
	\frac{2^{-l^\prime-1}}{2^{-l_0^\prime}}\leq\frac{d_W(y^\prime, x^\prime)}{d_W(y_0^\prime, x^\prime)}\leq 2^\theta\lambda.
	$$
	We infer from \eqref{sat-00} that
$$-1\leq l'_0-l'\leq 1+\theta+\log_2\lambda.$$
Then \eqref{sat-01} leads to
	\begin{equation*}
 |\phi_x(l)-l^\prime| \leq |\phi_x(l)-l_0^\prime|+|l_0^\prime-l^\prime|\leq C(\theta, \lambda).
	\end{equation*}
	This completes the proof.
\end{proof}	
	
	Further, the functions $\phi_x$ possess the following property.
\blem\label{qi}
	For $l_1, l_2\in S_x$, we have
	$$
		\frac{1}{\theta}|l_1-l_2|-C(\theta, \lambda) \leq |\phi_x(l_1)-\phi_x(l_2)| \leq \theta|l_1-l_2|+C(\theta, \lambda).
$$
\elem
	\bpf
 If $l_1=l_2$, the lemma is trivial. If $l_1\not=l_2$,
	without loss of generality, we assume that $l_1< l_2$.
Since $A_Z(x, l_1)\not=\emptyset\not= A_Z(x, l_2)$, let $y_1\in A_Z(x, l_1)$ and $y_2\in A_Z(x, l_2)$. Then we get
	\beq\label{10-7-1}
	2^{l_2-l_1-1}\leq \frac{d_Z(y_1, x)}{d_Z(y_2, x)} \leq 2^{l_2-l_1+1}.
	\eeq
Also, it follows from Lemma \ref{eq-5-30} that there are integers $l_i^\prime$, where $i\in \{1,2\}$, such that $y_i^\prime=f(y_i)\in A_{W}(x^{\prime}, l_i^{\prime})$ and
\beq\label{10-7-2}
|\phi_x(l_i)-l_i^\prime|\leq C(\theta, \lambda).
\eeq
	
By the assumption that $f$ is a $(\theta, \lambda)$-power quasi-symmetry and the estimates in \eqref{10-7-1}, we know that
	$$
2^{\frac{l_2-l_1}{\theta}}C(\theta, \lambda)^{-1} \leq \frac{d_W(y_1^\prime, x^\prime)}{d_W(y_2^\prime, x^\prime)}  \leq2^{{(l_2-l_1)}\theta}C(\theta, \lambda),
	$$
and thus, we infer from the fact $y_i^\prime=f(y_i)\in A_{W}(x^{\prime}, l_i^{\prime})$ for $i\in \{1,2\}$ that
$$
\frac{1}{\theta}|l_1-l_2|-C(\theta, \lambda)\leq |l_1^\prime-l_2^\prime|\leq \theta|l_1-l_2|+C(\theta, \lambda).
$$
Then \eqref{10-7-2} gives
\begin{equation*}
|\phi_x(l_1)-\phi_x(l_2)|\leq |\phi_x(l_1)-l_1^\prime|+|l_1^\prime-l_2^\prime|+ |\phi_x(l_2)-l_2^\prime|
\leq \theta|l_1-l_2|+C(\theta, \lambda)
\end{equation*}
and
\begin{equation*}
|\phi_x(l_1)-\phi_x(l_2)|\geq |l_1^\prime-l_2^\prime|- |\phi_x(l_2)-l_2^\prime|-|\phi_x(l_1)-l_1^\prime|
\geq \frac{1}{\theta}|l_1-l_2|-C(\theta, \lambda),
\end{equation*}
from which the lemma follows.
	\epf
	
	We extend the function $\phi_x: S_x\to S_{x^\prime}$ to a new function on $\mathbb R$ by linear interpolation, which is denoted by $\Phi_x$.  Let
$$M_x :=\sup S_x\;\;\mbox{and}\;\; m_x :=\inf S_x.$$ Then
$$M_x\in \mathbb Z \cup \{+\infty\}\;\;\mbox{and}\;\; m_x\in \mathbb Z \cup \{-\infty\}.$$

Let $x\in Z$. For $t\in \IR$, if $t\in S_x$, let
$$\Phi_x(t)=\phi_x(t).$$

If $S_x$ contains only one element, then $-\infty<m_x=M_x<\infty$. At this time, let $$\Phi_x(t)=\phi_x(M_x)+(t-M_x).$$

If $S_x$ contains at least two elements, then $m_x<M_x$.  For $t\in (m_x,  M_x)\setminus S_x$,  there exists an interval $[l_1, l_2]$ with $l_1, l_2\in S_x$ and $(l_1, l_2)\cap S_x=\emptyset$ such that $t\in[l_1, l_2]$. (For convenience, in the following, we call $[l_1, l_2]$ a {\it nested interval} for $t$. Sometimes, in order to emphasize the based point $x$, we say that $[l_1, l_2]$ is a nested interval for $t$ with respect to $x$. Obviously, for every $t\in (m_x, M_x)\setminus S_x$, its nested interval is unique.)
Now, we define
 \begin{equation}\label{11-25-1}
\Phi_x(t)=(1-\mu_t)\phi_x(l_1)+\mu_t \phi_x(l_2),
\end{equation}
 where $t=(1-\mu_t)l_1+\mu_t l_2$ with $\mu_t\in [0, 1]$. Then equivalently,
\begin{equation}\label{def-phi}
\Phi_x(t)=\frac{l_2\phi_x(l_1)-l_1\phi_x(l_2)}{l_2-l_1}+\frac{\phi_x(l_2)-\phi_x(l_1)}{l_2-l_1}t.
\end{equation}

If $M_x<+\infty$, then $M_x=\max S_x$. For $t\geq M_x$, let $$\Phi_x(t)=\phi_x(M_x)+(t-M_x).$$

If $m_x>-\infty$, then $m_x=\min S_x$. For $t\leq m_x$, let
\begin{equation}\label{eq-30-1}
\Phi_x(t)=\phi_x(m_x)+(t-m_x).
\end{equation}
	
	Obviously, the extended function $\Phi_x$ is non-decreasing and continuous on $\IR$, and satisfies
	\beq\label{mon-01}
	\lim_{t\to -\infty}\Phi_x(t)=-\infty,  \;\;  \lim_{t\to +\infty}\Phi_x(t)=+\infty \;\;  \text{and} \;\;  \Phi_x(\mathbb R)=\mathbb R.
\eeq

\begin{lem}\label{lem-linear}
For an interval $[u, v]\subset\mathbb R$, suppose that it is contained in $(-\infty, m_x]$ with $m_x>-\infty$, or $[l_1, l_2]$ with $l_1,l_2\in S_x$ and
$(l_1, l_2)\cap S_x=\emptyset$, or $[M_x, +\infty)$ with $M_x<+\infty$. For  $t\in [u, v]$, if $t=(1-\mu_t)u+\mu_t v$ with $\mu_t\in[0, 1]$, then
$$
\Phi_x(t)=(1-\mu_t)\Phi_x(u)+\mu_t\Phi_x(v).
$$
\end{lem}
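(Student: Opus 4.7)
The plan is to observe that on each of the three types of intervals described, the definition of $\Phi_x$ is already an affine function of $t$, and the claimed interpolation identity is simply the defining property of affine functions under convex combinations. So the proof reduces to inspecting the formula for $\Phi_x$ case by case and then invoking the trivial identity $a((1-\mu)u + \mu v) + b = (1-\mu)(au+b) + \mu(av+b)$.

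First I would handle the middle case. If $[u,v] \subset [l_1, l_2]$ with $l_1, l_2 \in S_x$ and $(l_1, l_2) \cap S_x = \emptyset$, then by \eqref{def-phi} the function $\Phi_x$ has the form $\Phi_x(t) = A + Bt$ on $[l_1, l_2]$, where
\[
A = \frac{l_2 \phi_x(l_1) - l_1 \phi_x(l_2)}{l_2 - l_1}, \qquad B = \frac{\phi_x(l_2) - \phi_x(l_1)}{l_2 - l_1}.
\]
Substituting $t = (1 - \mu_t) u + \mu_t v$ into $A + Bt$ and grouping yields $(1 - \mu_t)(A + Bu) + \mu_t (A + Bv) = (1 - \mu_t)\Phi_x(u) + \mu_t \Phi_x(v)$, giving the result.

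Next I would treat the two boundary cases identically. If $[u, v] \subset (-\infty, m_x]$, then $\Phi_x(t) = \phi_x(m_x) + (t - m_x)$ by \eqref{eq-30-1}, which is affine in $t$ with slope $1$; the same direct substitution as above gives the interpolation identity. The case $[u, v] \subset [M_x, +\infty)$ is identical, using $\Phi_x(t) = \phi_x(M_x) + (t - M_x)$.

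There is no real obstacle here: the only thing to be careful about is that $[u,v]$ actually lies entirely inside one of the three regions where a single affine formula governs $\Phi_x$, so that we are not crossing any of the breakpoints where the piecewise-linear extension changes slope. The hypothesis of the lemma is precisely what rules this out, and so the lemma follows from direct algebraic verification.
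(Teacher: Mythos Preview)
Your proof is correct and follows essentially the same approach as the paper: in each of the three cases the paper writes out the explicit affine formula for $\Phi_x$ at $u$, $v$, and $t$ and verifies the interpolation identity directly, which is exactly your observation that $\Phi_x$ is of the form $A+Bt$ on each such interval. The only difference is cosmetic---you invoke the general affine identity once, while the paper repeats the verification in each case.
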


\begin{proof}
Let $t=(1-\mu_t) u+\mu_t v$ with $\mu_t\in[0, 1]$.
If $[u, v]\subseteq (-\infty, m_x]$ with $m_x>-\infty$, then
$$\Phi_x(u)=\phi_x(m_x)+u-m_x,\;\;\Phi_x(v)=\phi_x(m_x)+v-m_x\;\;\mbox{and}\;\;\Phi_x(t)=\phi_x(m_x)+t-m_x.$$
These lead to
$$
\Phi_x(t)=(1-\mu_t)\Phi_x(u)+\mu_t\Phi_x(v).
$$

Similarly, if $[u, v]\subseteq [M_x, +\infty)$ with $M_x<+\infty$, then
$$
\Phi_x(t)=\phi_x(M_x)+(t-M_x)=(1-\mu_t)\Phi_x(u)+\mu_t\Phi_x(v).
$$

If $[u, v]\subseteq [l_1, l_2]$ with $l_1, l_2\in S_x$ and $(l_1, l_2)\cap S_x=\emptyset$, then \eqref{def-phi} ensures that

$$\Phi_x(u)=\frac{l_2\phi_x(l_1)-l_1\phi_x(l_2)}{l_2-l_1}+\frac{\phi_x(l_2)-\phi_x(l_1)}{l_2-l_1}u,$$
$$\Phi_x(v)=\frac{l_2\phi_x(l_1)-l_1\phi_x(l_2)}{l_2-l_1}+\frac{\phi_x(l_2)-\phi_x(l_1)}{l_2-l_1}v $$
and
$$\Phi_x(t)=\frac{l_2\phi_x(l_1)-l_1\phi_x(l_2)}{l_2-l_1}+\frac{\phi_x(l_2)-\phi_x(l_1)}{l_2-l_1}t.$$
These guarantee that
$$\Phi_x(t)=(1-\mu_t) \Phi_x(u)+\mu_t\Phi_x(v),
$$ and hence, the lemma is proved.
\end{proof}

The following result concerning $\Phi_x$ is a generalization of Lemma \ref{qi}.
\begin{lem}\label{lem-phi}
The function $\Phi_x: \mathbb R\to \mathbb R$ is a $(\theta,C)$-rough quasi-isometry, where $C=C(\theta,\lambda)$. More precisely, for any $t_1, t_2\in \mathbb R$, we have
\begin{equation}\label{phi-qi}
		\frac{1}{\theta}|t_1-t_2|-C(\theta,\lambda) \leq |\Phi_x(t_1)-\Phi_x(t_2)| \leq \theta|t_1-t_2|+C(\theta,\lambda).
\end{equation}
\end{lem}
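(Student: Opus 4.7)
The plan is to exploit the explicit piecewise-linear structure of $\Phi_x$. By its construction and by Lemma \ref{lem-linear}, $\Phi_x$ is affine with slope $1$ on the outer pieces $(-\infty, m_x]$ (if $m_x>-\infty$) and $[M_x, +\infty)$ (if $M_x<+\infty$), and affine on each nested interval $[l_1, l_2]$ with slope $\sigma=(\phi_x(l_2)-\phi_x(l_1))/(l_2-l_1)\geq 0$. Since $\Phi_x$ is non-decreasing, one may assume $t_1<t_2$ throughout. The strategy is to reduce the desired inequality to at most three terms: two ``tails'' each lying in a single affine piece, and one ``middle'' piece whose endpoints both lie in $S_x$ so that Lemma \ref{qi} applies directly. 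Keeping the number of summands bounded is what prevents the additive constant from blowing up.

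The first step is to verify the rough quasi-isometry inequality when $t_1<t_2$ lie in a single affine piece. On an outer piece this is immediate because the slope equals $1\in[1/\theta,\theta]$. On a nested interval $[l_1,l_2]$ with slope $\sigma$, the upper bound proceeds as follows: if $\sigma\leq\theta$ it is trivial; if $\sigma>\theta$ then Lemma \ref{qi} gives $(\sigma-\theta)(l_2-l_1)\leq C(\theta,\lambda)$, and combined with $t_2-t_1\leq l_2-l_1$ this yields $\sigma(t_2-t_1)\leq\theta(t_2-t_1)+C$. The lower bound is symmetric: if $\sigma\geq 1/\theta$ it is trivial, while if $\sigma<1/\theta$ then Lemma \ref{qi} gives $(1/\theta-\sigma)(l_2-l_1)\leq C$, hence $\frac{1}{\theta}(t_2-t_1)\leq\sigma(t_2-t_1)+C=\Phi_x(t_2)-\Phi_x(t_1)+C$.

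For arbitrary $t_1<t_2$ in $[m_x,M_x]$, set $s_1^+=\min\{s\in S_x:s\geq t_1\}$ and $s_2^-=\max\{s\in S_x:s\leq t_2\}$. If $s_1^+>s_2^-$ then $[t_1,t_2]$ lies in a single nested interval and the previous step already applies. Otherwise, write the three-term decomposition
\begin{equation*}
\Phi_x(t_2)-\Phi_x(t_1)=\bigl[\Phi_x(s_1^+)-\Phi_x(t_1)\bigr]+\bigl[\phi_x(s_2^-)-\phi_x(s_1^+)\bigr]+\bigl[\Phi_x(t_2)-\Phi_x(s_2^-)\bigr].
\end{equation*}
The first and third brackets are controlled by the single-piece estimates of the previous paragraph (each tail lying in the nested interval containing $t_1$ or $t_2$), while the middle bracket is controlled by Lemma \ref{qi} applied directly to the pair $s_1^+,s_2^-\in S_x$. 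Adding the three estimates produces the desired bounds with an additive constant at most $3C(\theta,\lambda)$. The remaining cases, where $t_1<m_x$ or $t_2>M_x$, are handled by inserting $m_x$ and/or $M_x$ as additional breakpoints, yielding at most five summands and a final constant of the form $O_{\theta,\lambda}(1)$.

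The principal obstacle is conceptual rather than computational: a naive telescoping over every consecutive pair of elements of $S_x\cap[t_1,t_2]$ would accumulate one copy of the Lemma \ref{qi} constant per gap, producing a multiplicative error in $|t_2-t_1|$ rather than an additive constant. The key is to bundle all intermediate gaps into a single application of Lemma \ref{qi} to the pair $(s_1^+,s_2^-)$, and to handle only the two end-tails separately via the affine structure.
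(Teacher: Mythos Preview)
Your proposal is correct and follows essentially the same approach as the paper. Both arguments reduce to the decomposition $[t_1,t_2]=[t_1,s_1^+]\cup[s_1^+,s_2^-]\cup[s_2^-,t_2]$ (the paper writes $l_3,l_4$ for your $s_1^+,s_2^-$), apply Lemma~\ref{qi} to the middle piece, and use the fact that on each tail the slope $\sigma$ satisfies $\sigma(t_2-t_1)\le\theta(t_2-t_1)+C$ and $\sigma(t_2-t_1)\ge\theta^{-1}(t_2-t_1)-C$ via $t_2-t_1\le l_2-l_1$; your presentation is slightly cleaner in isolating the single-piece estimate as a separate preliminary step, whereas the paper folds it into a case analysis on whether $t_1=l_3$ and $t_2=l_4$.
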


\begin{proof}
Let $t_1$, $t_2\in \mathbb R$. Without loss of generality, we assume that $t_1< t_2$. If $S_x$ contains only one element, or $[t_1, t_2]\subseteq (-\infty, m_x]$ with $m_x>-\infty$, or $[t_1, t_2]\subseteq [M_x,\infty)$ with $M_x<\infty$, then the proofs are obvious since, in these cases, $$|\Phi_x(t_1)-\Phi_x(t_2)|=|t_1-t_2|.$$

In the following, we assume that $S_x$ contains at least two elements, $[t_1, t_2]\nsubseteq (-\infty, m_x]$ with $m_x>-\infty$, and $[t_1, t_2]\nsubseteq [M_x,\infty)$ with $M_x<\infty$.

If $[t_1, t_2]\cap S_x=\emptyset$, then there exist $l_1$, $l_2\in S_x$ such that $[l_1, l_2]$ is the nested interval for both $t_1$ and $t_2$. This implies that for $i\in\{1, 2\}$,
$$
\Phi_x(t_i)=\frac{l_2\phi_x(l_1)-l_1\phi_x(l_2)}{l_2-l_1}+\frac{\phi_x(l_2)-\phi_x(l_1)}{l_2-l_1}t_i,
$$
and so, it follows from Lemma \ref{qi} that
$$ \frac{1}{\theta}|t_1-t_2|-C(\theta,\lambda) \leq \Phi_x(t_2)-\Phi_x(t_1)=\frac{\phi_x(l_2)-\phi_x(l_1)}{l_2-l_1}(t_2-t_1)\leq  \theta|t_1-t_2|+C(\theta,\lambda),$$ where the fact $0<t_2-t_1<l_2-l_1$ is applied.

Now, we assume that $[t_1, t_2]\cap S_x\not=\emptyset$.
Then there are  $l_3$, $l_4\in [t_1, t_2]\cap S_x$ such that
\begin{equation}\label{eq-9-21}
[t_1, t_2]\cap S_x=[l_3, l_4]\cap S_x.
\end{equation} It is possible that $l_3=l_4$. If this case occurs, we regard the closed interval $[l_3,l_4]$ as the singlet $\{l_3(=l_4)\}$.

Since $\phi_x$ is non-decreasing, again, by Lemma \ref{qi}, we see that
\begin{equation*}
\frac{1}{\theta}(l_4-l_3)-C(\theta,\lambda) \leq \phi_x(l_4)-\phi_x(l_3)\leq \theta(l_4-l_3)+C(\theta,\lambda).
\end{equation*}
This shows that \eqref{phi-qi} holds true for the case when $t_1=l_3$ and $t_2=l_4$.

We separate the rest arguments into the following cases.

\bca\label{case-a}
Suppose that $t_1\neq l_3$ and $t_2=l_4$.
\eca
From \eqref{eq-9-21}, we see that $[t_1, l_3)\cap S_x=\emptyset$. If $l_3=m_x>-\infty$, then
\beq\label{10-8-5}
\Phi_x(l_3)-\Phi_x(t_1)=l_3-t_1.
\eeq

If $l_3>m_x$, then there is $k_1\in S_x$ such that $[k_1, l_3]$ is the nested interval for $t_1$. It follows that
\beq\label{10-8-6}
\Phi_x(l_3)-\Phi_x(t_1)=\frac{\phi_x(l_3)-\phi_x(k_1)}{l_3-k_1}(l_3-t_1).
\eeq

Since $t_2=l_4$ and
\begin{equation*}
\Phi_x(t_2)-\Phi_x(t_1)=\phi_x(l_4)-\phi_x(l_3)+\Phi_x(l_3)-\Phi_x(t_1),
\end{equation*}
it follows from Lemma \ref{qi}, together with \eqref{10-8-5} and \eqref{10-8-6}, that
\begin{equation*}
\frac{1}{\theta}(t_2-t_1)-C(\theta, \lambda)\leq |\Phi_x(t_1)-\Phi_x(t_2)|\leq \theta(t_2-t_1)+C(\theta, \lambda).
\end{equation*}

\bca\label{case-b}
Suppose that $t_1=l_3$ and $t_2\neq l_4$.
\eca

Similarly, under this assumption, if  $l_4=M_x<+\infty$, then
\begin{equation*}
\Phi_x(t_2)-\Phi_x(l_4)=t_2-l_4,
\end{equation*}
and if $l_4<M_x$, then there is $k_2\in S_x$ such that $[l_4, k_2]$ is the nested interval for $t_2$, and
\begin{equation*}
\Phi_x(t_2)-\Phi_x(l_4)=\frac{\phi_x(k_2)-\phi_x(l_4)}{k_2-l_4}(t_2-l_4),
\end{equation*}
and so, Lemma \ref{qi} leads to
\begin{equation*}
\frac{1}{\theta}(t_2-t_1)-C(\theta, \lambda)\leq |\Phi_x(t_1)-\Phi_x(t_2)|\leq \theta(t_2-t_1)+C(\theta, \lambda).
\end{equation*}

\bca
Suppose that $t_1\neq l_3$ and $t_2\neq l_4$.
\eca
The similar reasoning as in the discussions in Cases \ref{case-a} and \ref{case-b} shows that
\begin{equation*}\label{emb-condition}
		\Phi_x(l_3)-\Phi_x(t_1)=
		\left\{\begin{array}{cl}
			l_3-t_1,& {\rm if} \;\; l_3=m_x>-\infty \\
			\frac{\phi_x(l_3)-\phi_x(k_3)}{l_3-k_3}(l_3-t_1),& {\rm if} \;\; l_3>m_x
		\end{array}\right.
	\end{equation*}
and
\begin{equation*}\label{emb-condition}
		\Phi_x(t_2)-\Phi_x(l_4)=
		\left\{\begin{array}{cl}
			t_2-l_4,& {\rm if} \;\; l_4=M_x<+\infty\\
			\frac{\phi_x(k_4)-\phi_x(l_4)}{k_4-l_4}(t_2-l_4),& {\rm if} \;\; l_4<M_x,
		\end{array}\right.
	\end{equation*}
where $k_3\in S_x$ and $[k_3, l_3]$ is the nested interval for $t_1$ (resp. $k_4\in S_x$ and $[l_4, k_4]$ is the nested interval for $t_2$).

Since
\begin{equation*}
\Phi_x(t_2)-\Phi_x(t_1)=\Phi_x(t_2)-\Phi_x(l_4)+(\phi_x(l_{4})-\phi_x(l_3))+\Phi_x(l_3)-\Phi_x(t_1),
\end{equation*}
it follows from Lemma \ref{qi} that
$$
\frac{1}{\theta}|t_1-t_2|-C(\theta,\lambda) \leq |\Phi_x(t_1)-\Phi_x(t_2)| \leq \theta|t_1-t_2|+C(\theta,\lambda).
$$
Now, the lemma is proved.
\end{proof}

\blem\label{9-30-1}
Suppose that $x\not= y\in Z$.
Then we have		
\beq\label{log-simi}
\left|\log_2\frac{1}{d_W(x^\prime, y^\prime)}-\Phi_x\left(\log_2\frac{1}{d_Z(x, y)}\right)\right|\leq C(\theta, \lambda)
\eeq
and
\beq\label{phi-xy}
\left|\Phi_x\left(\log_2\frac{1}{d_Z(x, y)}\right)-\Phi_y\left(\log_2\frac{1}{d_Z(x, y)}\right)\right|\leq C(\theta, \lambda).
\eeq
\elem	

\bpf Let $x, y \in Z$ with $x\neq y$, $x^{\prime}=f(x)$ and $y^{\prime}=f(y)$.
	Obviously, there exists $l\in S_x$ such that $y\in A_Z(x, l)$, i.e.,
	\beq\label{10-7-7}	
l \leq \log_2\frac{1}{d_Z(x, y)}<l+1.	
\eeq
Also, we know from Lemma \ref{eq-5-30} that there is $l^\prime\in S_{x^\prime}$ such that $y^\prime\in A_W(x^\prime, l^\prime)$, i.e.,
\begin{equation}\label{20-1}
l^\prime \leq \log_2\frac{1}{d_W(x^\prime, y^\prime)}<l^\prime+1\;\;\mbox{and}\;\;|\Phi_x(l)-l^\prime|\leq C(\theta, \lambda).
\end{equation}
	
Moreover, it follows from \eqref{10-7-7} and Lemma \ref{lem-phi} that
\begin{equation}\label{20-2}
0\leq \Phi_x\left(\log_2\frac{1}{d_Z(x, y)}\right)- \Phi_x(l) \leq \Phi_x(l+1)-\Phi_x(l)\leq C(\theta, \lambda).
\end{equation}

Since
\begin{align*}
\left|\log_2\frac{1}{d_W(x^\prime, y^\prime)}-\Phi_x\left(\log_2\frac{1}{d_Z(x, y)}\right)\right| \leq & \left|\log_2\frac{1}{d_W(x^\prime, y^\prime)}-l'\right|+
\left|l'-\Phi_x(l)\right|\\
&+ \left| \Phi_x(l)- \Phi_x\left(\log_2\frac{1}{d_Z(x, y)}\right) \right|,
\end{align*}
it follows from the inequalities \eqref{20-1} and \eqref{20-2} that
$$
		\left|\log_2\frac{1}{d_W(x^\prime, y^\prime)}-\Phi_x\left(\log_2\frac{1}{d_Z(x, y)}\right)\right|\leq C(\theta, \lambda),
$$
 which is the estimate \eqref{log-simi} in the lemma. This estimate implies that the second estimate, i.e., \eqref{phi-xy}, in the lemma is true as well, and hence, the proof of the lemma is complete.
\epf

\blem\label{mon-00}
Suppose that $x$, $y\in Z$ with $x\not= y$ and $l\in S_x\cup S_y$ with $l< \log_2 (1/d_Z(x, y))$. Then $$|\Phi_x(l)-\Phi_y(l)|\leq C(\theta, \lambda).$$
\elem	
\bpf
Without loss of generality, we assume that $l\in S_x$. We separate the proof into two possibilities: $\log_2 (1/d_Z(x, y))-2 \leq l< \log_2 (1/d_Z(x, y))$
and $l< \log_2 (1/d_Z(x, y))-2$. For the first possibility, since
\begin{align*}
\left|\Phi_x(l)-\Phi_y(l)\right|
\leq& \left|\Phi_x(l)- \Phi_x\left(\log_2 \frac{1}{d_Z(x, y)}\right)\right|\\
&+ \left|\Phi_x\left(\log_2 \frac{1}{d_Z(x, y)}\right)-\Phi_y\left(\log_2  \frac{1}{d_Z(x, y)}\right)\right|\\
&+ \left|\Phi_y\left(\log_2 \frac{1}{d_Z(x, y)}\right)-\Phi_y(l)\right|,
\end{align*}
$$\left|\Phi_x(l)- \Phi_x\left(\log_2 \frac{1}{d_Z(x, y)}\right)\right|\leq \left|\Phi_x(l+2)-\Phi_x(l)\right|
$$
and
$$
 \left|\Phi_y\left(\log_2 \frac{1}{d_Z(x, y)}\right)-\Phi_y(l)\right|\leq \left|\Phi_y(l+2)-\Phi_y(l)\right|,
$$
we infer from \eqref{phi-xy} in Lemma \ref{9-30-1} and Lemma \ref{lem-phi} that
	\beq\label{10-8-3}
	|\Phi_x(l)-\Phi_y(l)|\leq C(\theta, \lambda).
\eeq

For the remaining possibility, that is, $l< \log_2 (1/d_Z(x, y))-2$, we easily know that $$d_Z(x, y)\leq 2^{-l-2}.$$

Since $l\in S_x$,
we see that there exists a point $z\in A_Z(x, l)$ such that
	\begin{equation}\label{eq-zx}
		d_Z(x, y)\leq 2^{-l-2}<2^{-l-1}<d_Z(z, x)\leq 2^{-l},
	\end{equation}
and thus, we obtain from the fact
 $$d_Z(z, x)-d_Z(x, y)\leq d_Z(y, z)\leq d_Z(z, x)+d_Z(x, y)$$ that
	\begin{equation}\label{eq-zy}
	2^{-l-2}< d_Z(y, z)< 2^{-l+1}.
	\end{equation}
	This implies that there is $k\in S_y$ with $l-1\leq k\leq l+1$ such that $z\in A_Z(y, k)$.
	So, by Lemma  \ref{eq-5-30}, there exists $l_y^\prime\in S_{y^\prime}$ such that  $z^\prime\in A_{W}(y^\prime, l_y^\prime)$ and
$$
|\Phi_y(k)-l_y^\prime|\leq C(\theta, \lambda).
$$
Furthermore, by the fact $l-1\leq k\leq l+1$, Lemma \ref{lem-phi} gives
$$
|\Phi_y(k)-\Phi_y(l)|\leq C(\theta, \lambda),
$$
and so,
\begin{equation}\label{eq-10-8}
|\Phi_y(l)-l_y^\prime|\leq C(\theta, \lambda).
\end{equation}

Since $z\in A_Z(x, l)$, again, we deduce from Lemma \ref{eq-5-30} that there exists $l_x^\prime\in S_{x^\prime}$ such that $z^\prime\in A_{W}(x^\prime, l_x^\prime)$ and
\begin{equation}\label{eq-10-8-1}
|\Phi_x(l)-l_x^\prime|\leq C(\theta, \lambda).
\end{equation}

Moreover, the assumption that $f$ is a $(\theta, \lambda)$-power quasi-symmetry, together with \eqref{eq-zx} and \eqref{eq-zy}, guarantees that
\begin{equation*}
\frac{d_W(x^\prime, z^\prime)}{d_W(y^\prime, z^\prime)}\leq 2^{2\theta}\lambda \ \
\text{ and }
\ \
\frac{d_W(y^\prime, z^\prime)}{d_W(x^\prime, z^\prime)}\leq 2^{2\theta}\lambda.
\end{equation*}
Then  we infer from the facts $z^\prime\in A_{W}(x^\prime, l_x^\prime)$ and $z^\prime\in A_{W}(y^\prime, l_y^\prime)$ that
	\begin{equation*}
|l_x^\prime-l_y^\prime|\leq C(\theta, \lambda).
\end{equation*}
Hence we see from \eqref{eq-10-8} and \eqref{eq-10-8-1} that
	\beq\label{10-8-4}
	|\Phi_x(l)-\Phi_y(l)|\leq |\Phi_x(l)-l_x^\prime| +|l_x^\prime-l_y^\prime|+|l_y^\prime-\Phi_y(l)| \leq  C(\theta, \lambda).
\eeq
Now, we conclude from \eqref{10-8-3} and \eqref{10-8-4} that the lemma is true.
\epf

\blem\label{9-30-2}
Suppose that $x,$ $y\in Z$ with $x\not= y$. If $t\geq d_Z(x, y)$, then
	\begin{equation}\label{phi-simi}
		\left|\Phi_x\left(\log_2 \frac{1}{t}\right)-\Phi_y\left(\log_2 \frac1 t\right)\right|\leq C(\theta, \lambda).
	\end{equation}
\elem
\bpf
	It follows from the inequality \eqref{phi-xy} in Lemma \ref{9-30-1} that
	\begin{equation}\label{10-2-2}
		\left|\Phi_x\left(\log_2 \frac{1}{d_Z(x, y)}\right)-\Phi_y\left(\log_2  \frac{1}{d_Z(x, y)}\right)\right|\leq C(\theta, \lambda).
	\end{equation}

Let $t> d_Z(x, y)$. For the proof under this assumption, let $$m_x=\inf S_x,\;\; M_x=\sup S_x,\;\; m_y=\inf S_y\;\;\mbox{and}\;\;M_y=\sup S_y.$$
Since there is an integer $l$ with $l\leq \min\{M_x, M_y\}$ such that $y\in A_Z(x, l)$ and $x\in A_Z(y, l)$, it follows that
\beq\label{10-2-1}
\log_2 \frac{1}{t}<\log_2\frac{1}{d_Z(x, y)}< l+1\leq \min\{M_x, M_y\}+1.
\eeq

First, we consider the possibility: $\min\{M_x, M_y\}\leq \log_2 \frac{1}{t}< \min\{M_x, M_y\}+1$. Then $\min\{M_x, M_y\}<\infty$. Without loss of generality, we assume that $M_x=\min\{M_x, M_y\}$. Since
\begin{align*}
\left|\Phi_x\left(\log_2 \frac{1}{t}\right)-\Phi_y\left(\log_2 \frac1 t\right)\right|
\leq& \left|\Phi_x\left(\log_2 \frac{1}{t}\right)-\Phi_x\left(M_x\right)\right|+\left|\Phi_x\left(M_x\right)-\Phi_y\left(M_x\right)\right|\\
&+\left|\Phi_y\left(M_x\right)-\Phi_y\left(\log_2 \frac1 t\right)\right|
\end{align*}
and
$$M_x<\log_2 \frac{1}{d_Z(x, y)},$$
we know from Lemmas \ref{lem-phi} and \ref{mon-00} that
\beq\label{10-2-3}
\left|\Phi_x\left(\log_2 \frac{1}{t}\right)-\Phi_y\left(\log_2 \frac1 t\right)\right|
\leq C(\theta, \lambda).
\eeq
This shows that \eqref{phi-simi} holds for this possibility.

Next, we consider the other possibility: $\log_2 \frac{1}{t}< \min\{M_x, M_y\}$. In the following, we find the possible positions of $\log_2 \frac{1}{t}$.

If $m_x=-\infty$ (resp. $m_y=-\infty$), obviously, there exists a nested interval for $\log_2 \frac{1}{t}$ with respect to $x$ (resp. with respect to $y$).

If $m_x=M_x$  (resp. $m_y=M_y$), then $\log_2 \frac{1}{t}\in (-\infty, m_x)$ (resp. $\log_2 \frac{1}{t}\in (-\infty, m_y)$). Now, we assume that $m_x<M_x$  (resp. $m_y<M_y$).

If $m_x>-\infty$ (resp. $m_y>-\infty$), then $\log_2 \frac{1}{t}$ is contained in $(-\infty, m_x]$ or $(m_x, M_x)$ (resp. $(-\infty, m_y]$ or $(m_y, M_y)$).
If $\log_2 \frac{1}{t}\in(m_x, M_x)$ (resp. $\log_2 \frac{1}{t}\in(m_y, M_y)$), then there exists a nested interval for $\log_2 \frac{1}{t}$ with respect to $x$ (resp. with respect to $y$).

From the discussions as above, we know that the possible positions of $\log_2 \frac{1}{t}$ are as follows:

\ben
\item[$(i)$]
$\log_2 \frac{1}{t}\in (-\infty, m_x]\cap (-\infty, m_y]$, where $m_x>-\infty$ and $m_y>-\infty$.

\item[$(ii)$]
$\log_2 \frac{1}{t}\in (-\infty, m_x]\cap [k_1, k_2]$, where $m_x>-\infty$, $m_y<M_y$ and $[k_1, k_2]$ is a nested interval for $\log_2 \frac{1}{t}$ with respect to $y$.

\item[$(iii)$]
$\log_2 \frac{1}{t}\in (-\infty, m_y]\cap[l_1, l_2]$, where $m_y>-\infty$, $m_x<M_x$ and $[l_1, l_2]$ is a nested interval for $\log_2 \frac{1}{t}$ with respect to $x$.

\item[$(iv)$]
$\log_2 \frac{1}{t}\in [l_1, l_2]\cap [k_1, k_2]$, where $m_x<M_x$ (resp. $m_y<M_y$) and $[l_1, l_2]$ (resp. $[k_1, k_2]$) is a nested interval for $\log_2 \frac{1}{t}$ with respect to $x$ (resp. with respect to $y$).
\een

We continue the discussions according to the aforementioned possible positions of $\log_2 \frac{1}{t}$.

\begin{case}\label{case-1}
Suppose that $\log_2 \frac{1}{t}\in (-\infty, m_x]\cap (-\infty, m_y]$, where $m_x>-\infty$ and $m_y>-\infty$.
\end{case}

Without loss of generality, we assume that $m_x\geq m_y$.  Since \eqref{eq-30-1} gives
$$\Phi_x\left(\log_2 \frac{1}{t}\right)-\Phi_y\left(\log_2 \frac{1}{t}\right)=\Phi_x(m_x)-\Phi_y(m_y)-m_x+m_y
$$
and
$$\Phi_x(m_y)=\Phi_x(m_x)+\left(m_y-m_x\right),$$
and since the fact $x\in A_Z(y, l)$ leads to $m_y\leq l\leq \log_2 (1/d_Z(x, y))$,
we know from Lemma \ref{mon-00} that
\beq\label{10-2-4}
\left|\Phi_x\left(\log_2 \frac{1}{t}\right)-\Phi_y\left(\log_2 \frac{1}{t}\right)\right|\leq C(\theta, \lambda).
\eeq

\begin{case}\label{case-2}
Suppose that $\log_2 \frac{1}{t}\in (-\infty, m_x]\cap [k_1, k_2]$, where $m_x>-\infty$, $m_y<M_y$ and $[k_1, k_2]$ is a nested interval for $\log_2 \frac{1}{t}$ with respect to $y$.
\end{case}

 Since $\log_2 \frac{1}{t}\in (-\infty, m_x]\cap [k_1, k_2]$, we know that $m_x\geq k_2$ or $k_1\leq m_x< k_2$.

If $m_x\geq k_2$, then $[k_1, k_2]\subset (-\infty, m_x]$. Since there is  $\mu_1\in[0, 1]$ such that
$$\log_2 \frac{1}{t}=(1-\mu_1)k_1+\mu_1 k_2,$$
by Lemma \ref{lem-linear}, we have
$$
\Phi_x\left(\log_2 \frac{1}{t}\right)=(1-\mu_1)\Phi_x(k_1)+\mu_1\Phi_x(k_2)
$$
and
$$
\Phi_y\left(\log_2 \frac{1}{t}\right)=(1-\mu_1)\Phi_y(k_1)+\mu_1\Phi_y(k_2).
$$

We divide the arguments into two cases: $k_2< \log_2 (1/d_Z(x, y))$ and $k_2\geq \log_2 (1/d_Z(x, y))$. For the first case, we infer from Lemma \ref{mon-00} that
\begin{align}
\left|\Phi_x\left(\log_2 \frac{1}{t}\right)-\Phi_y\left(\log_2 \frac{1}{t}\right)\right| & \leq  (1-\mu_1)|\Phi_x(k_1)-\Phi_y(k_1)|+\mu_1|\Phi_x(k_2)-\Phi_y(k_2)| \notag\\
& \leq  C(\theta, \lambda).
\end{align}

For the remaining case, that is, $k_2\geq \log_2 (1/d_Z(x, y))$, similarly, we know that there is $\mu_2\in[0, 1]$ such that
\begin{align*}
\left|\Phi_x\left(\log_2 \frac{1}{t}\right)-\Phi_y\left(\log_2 \frac{1}{t}\right)\right|
\leq& (1-\mu_2)|\Phi_x(k_1)-\Phi_y(k_1)|\\
 &+\mu_2\Big|\Phi_x\left(\log_2 \frac{1}{d_Z(x, y)}\right)-\Phi_y\left(\log_2 \frac{1}{d_Z(x, y)}\right)\Big|.
\end{align*}
Again, we deduce from Lemma \ref{mon-00}, together with \eqref{10-2-2}, that
\beq\label{10-2-9}
\left|\Phi_x\left(\log_2 \frac{1}{t}\right)-\Phi_y\left(\log_2 \frac{1}{t}\right)\right| \leq  C(\theta, \lambda).
\eeq

If $k_1 \leq m_x< k_2$, then $\log_2 \frac{1}{t}\in [k_1, m_x]$, and so, Lemma \ref{lem-linear} guarantees that there is $\mu_3\in [0,1]$ such that
$$
\Phi_x\left(\log_2 \frac{1}{t}\right)=(1-\mu_3)\Phi_x(k_1)+\mu_3\Phi_x(m_x)
$$
and
$$
\Phi_y\left(\log_2 \frac{1}{t}\right)=(1-\mu_3)\Phi_y(k_1)+\mu_3\Phi_y(m_x).
$$
Since \eqref{10-2-1} implies that $m_x < \log_2 1/d_Z(x, y)$, once more, Lemma \ref{mon-00} ensures that
\begin{align}\label{10-2-5}
\left|\Phi_x\left(\log_2 \frac{1}{t}\right)-\Phi_y\left(\log_2 \frac{1}{t}\right)\right|
& \leq (1-\mu_3)|\Phi_x(k_1)-\Phi_y(k_1)| +\mu_3|\Phi_x(m_x)-\Phi_y(m_x)|\notag\\
& \leq C(\theta, \lambda).
\end{align}

\begin{case}\label{case-3}
Suppose that $\log_2 \frac{1}{t}\in (-\infty, m_y]\cap[l_1, l_2]$, where $m_y>-\infty$, $m_x<M_x$ and $[l_1, l_2]$ is a nested interval for $\log_2 \frac{1}{t}$ with respect to $x$.
\end{case}

Similar arguments as in Case \ref{case-2} show that
\beq\label{10-2-6}
\left|\Phi_x\left(\log_2 \frac{1}{t}\right)-\Phi_y\left(\log_2 \frac{1}{t}\right)\right| \leq  C(\theta, \lambda).
\eeq

\begin{case}\label{case-4}
Suppose that $\log_2 \frac{1}{t}\in [l_1, l_2]\cap [k_1, k_2]$, where $m_x<M_x$ $($resp. $m_y<M_y$$)$ and $[l_1, l_2]$ $($resp. $[k_1, k_2]$$)$ is a nested interval for $\log_2 \frac{1}{t}$ with respect to $x$ $($resp. with respect to $y$$)$.
\end{case}

If $\log_2 \frac{1}{t}\in [l_1, l_2]\cap [k_1, k_2]$, we only need to consider the cases: $l_1\leq k_1\leq k_2\leq l_2$ and $l_1\leq k_1\leq l_2\leq k_2,$
since the discussions for the other two cases, those are $k_1\leq l_1\leq l_2\leq k_2$ and $k_1\leq l_1\leq k_2\leq l_2$, are very similar. It follows from \eqref{10-2-1} that $l_1\vee k_1<\log_21/d_Z(x,y)$. Also, according to the  discussions in Case \ref{case-2}, we may assume that $k_2\vee l_2<  \log_2 (1/d_Z(x, y))$.

For the case $l_1\leq k_1\leq k_2\leq l_2$, that is, $[k_1, k_2]\subset [l_1, l_2]$, it follows from Lemma \ref{lem-linear} that there is $\mu_4\in [0,1]$ such that
\begin{align*}
\left|\Phi_x\left(\log_2 \frac{1}{t}\right)-\Phi_y\left(\log_2 \frac{1}{t}\right)\right|
\leq (1-\mu_4)|\Phi_x(k_1)-\Phi_y(k_1)|+\mu_4|\Phi_x(k_2)-\Phi_y(k_2)|,
\end{align*}
and then,
Lemma \ref{mon-00} gives
\beq\label{10-3-1}
\left|\Phi_x\left(\log_2 \frac{1}{t}\right)-\Phi_y\left(\log_2 \frac{1}{t}\right)\right|
\leq C(\theta, \lambda).
\eeq

For the remaining case, that is, $l_1\leq k_1\leq l_2\leq k_2$, we know from Lemma \ref{lem-linear} that there is $\mu_5\in [0,1]$ such that
\beqq
\left|\Phi_x\left(\log_2 \frac{1}{t}\right)-\Phi_y\left(\log_2 \frac{1}{t}\right)\right|
\leq  (1-\mu_5)|\Phi_x(k_1)-\Phi_y(k_1)|+\mu_5|\Phi_x(l_2)-\Phi_y(l_2)|,\\ \nonumber
\eeqq
and thus, Lemma \ref{mon-00} gives
\beq\label{10-2-7}
\left|\Phi_x\left(\log_2 \frac{1}{t}\right)-\Phi_y\left(\log_2 \frac{1}{t}\right)\right|
\leq  C(\theta, \lambda).
\eeq

Now, we conclude from \eqref{10-2-2}, \eqref{10-2-3}$-$\eqref{10-2-7} that \eqref{phi-simi} holds true for all $t\geq d_Z(x, y)$, and hence, the lemma is proved.
\epf

Now, we are ready to define the mappings $f_x$ based on $\Phi_x$. For $(x, t)\in \text{Con}_h(Z)$,  let
	\begin{equation}\label{eq-def-fx}
		f_x(x, t)=\left(x^\prime, 2^{-\Phi_x\left(\log_2\frac{1}{t}\right)}\right).
	\end{equation} Recall here that $x^{\prime}=f(x)$.
Then it follows from  \eqref{11-25-1}$-$\eqref{mon-01} that $f_x(R_x)=R_{x^\prime}$ for every $x\in Z$.

The following lemma concerning the mappings $f_x$ is a generalization of \cite[Lemma 7.3]{BSC} since neither $(Z,d_Z)$ nor $(W,d_W)$ is assumed to be bounded.

\begin{lem}\label{lem-fx}
Suppose that $f$ is a $(\theta, \lambda)$-power quasi-symmetry between two metric spaces $(Z, d_Z)$ and $(W, d_W)$, where $\theta\geq 1$ and $\lambda\geq 1$.
For $x\in Z$, the mapping $f_x: R_x\to R_{x^{\prime}}$ defined as in \eqref{eq-def-fx} satisfies the following properties:
	
	$(1)$ There exists a constant $k=k(\theta, \lambda)\geq 0$ such that for every $x\in Z$, the mapping $f_x$ is a $(\theta, k)$-rough quasi-isometry.
	
	$(2)$ For $x, y\in Z$ with $x\neq y$, if $t=d_Z(x, y)$ and $f_x(x, t)=(x^\prime, t^\prime)$, then
	$$
	C(\theta, \lambda)^{-1}d_W(x^\prime, y^\prime)\leq  t^\prime \leq C(\theta, \lambda)d_W(x^\prime, y^\prime).
	$$
	
	$(3)$ For $t_1, t_2\in (0, \infty)$, let $f_x(x, t_1)=(x^\prime, {t_1}^\prime)$ and $f_x(x, t_2)=(x^\prime, {t_2}^\prime)$.
 If
	$
	t_1\leq t_2,
	$
	then
	$$
	{t_1}^\prime\leq {t_2}^\prime.
	$$
	
	$(4)$ For $x, y\in Z$ with $x\neq y$, if $t\geq d_Z(x, y)$, then
	$$
	{\rho_h}(f_x(x, t), f_y(y, t))=O_{\theta, \lambda}(1).
	$$
\end{lem}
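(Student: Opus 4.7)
The plan is to reduce everything to the scalar behavior of $\Phi_x$ along the ray $R_x$. The key observation is that, because $d_Z(x,x) = 0$, the formula \eqref{metric-cone} collapses along a single ray to $\rho_h((x,s),(x,t)) = 2\log\frac{s \vee t}{\sqrt{st}} = |\log(s/t)|$. Writing $t_i' = 2^{-\Phi_x(\log_2(1/t_i))}$ for $f_x(x,t_i) = (x',t_i')$, I would compute
$$\rho_h(f_x(x,t_1),f_x(x,t_2)) = |\log(t_1'/t_2')| = (\log 2)\bigl|\Phi_x(\log_2(1/t_1)) - \Phi_x(\log_2(1/t_2))\bigr|,$$
while $\rho_h((x,t_1),(x,t_2)) = (\log 2)|\log_2(1/t_1) - \log_2(1/t_2)|$. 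Property $(1)$ then follows at once from the rough quasi-isometry estimate for $\Phi_x$ in Lemma \ref{lem-phi}, with the factor $\log 2$ absorbed into the additive constant. The coboundedness required by the definition of rough quasi-isometry is automatic because $\Phi_x(\mathbb{R}) = \mathbb{R}$ by \eqref{mon-01}, so in fact $f_x(R_x) = R_{x'}$.

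Parts $(2)$ and $(3)$ are essentially translations of earlier results. For $(2)$, setting $t = d_Z(x,y)$ yields $\log_2(1/t') = \Phi_x(\log_2(1/d_Z(x,y)))$, and the first estimate of Lemma \ref{9-30-1} gives $|\log_2(d_W(x',y')/t')| \leq C(\theta,\lambda)$, which on exponentiating is precisely the claim. For $(3)$, the map $t \mapsto t' = 2^{-\Phi_x(\log_2(1/t))}$ is the composition of the two order-reversing maps $t \mapsto \log_2(1/t)$ and $u \mapsto 2^{-u}$ with the order-preserving map $\Phi_x$, hence itself order-preserving, so $t_1 \leq t_2$ forces $t_1' \leq t_2'$.

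Part $(4)$ is where the real work lies. I would denote $f_x(x,t) = (x',t_1')$ and $f_y(y,t) = (y',t_2')$ and aim to bound each of the three quantities in
$$\rho_h((x',t_1'),(y',t_2')) = 2\log\frac{d_W(x',y') + t_1' \vee t_2'}{\sqrt{t_1' t_2'}}.$$
First, using the hypothesis $t \geq d_Z(x,y)$ and applying Lemma \ref{9-30-2} at $\log_2(1/t)$ gives $|\log_2(t_1'/t_2')| \leq C(\theta,\lambda)$, so both $t_1' \vee t_2'$ and $\sqrt{t_1' t_2'}$ are comparable to $t_1'$ up to a $(\theta,\lambda)$-dependent multiplicative constant. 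Second, monotonicity of $\Phi_x$ together with $\log_2(1/t) \leq \log_2(1/d_Z(x,y))$ gives $\log_2(1/t_1') \leq \Phi_x(\log_2(1/d_Z(x,y)))$, and combining this with Lemma \ref{9-30-1} produces $d_W(x',y') \leq C(\theta,\lambda)\, t_1'$. Substituting these bounds into the displayed formula yields an absolute upper bound of the form $O_{\theta,\lambda}(1)$, and the lower bound is trivial since $\rho_h \geq 0$. The main obstacle I anticipate is the bookkeeping in this last part: three distinct comparisons (ray-monotonicity of $\Phi_x$, Lemma \ref{9-30-1} at $d_Z(x,y)$, and the cross-point estimate Lemma \ref{9-30-2}) must be combined cleanly so that only the parameters $\theta, \lambda$ appear in the final constant.
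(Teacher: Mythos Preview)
Your proposal is correct and follows essentially the same route as the paper: each part is reduced to a scalar statement about $\Phi_x$ and then dispatched with Lemma~\ref{lem-phi} for (1), Lemma~\ref{9-30-1} for (2), monotonicity for (3), and the combination of Lemmas~\ref{9-30-1} and~\ref{9-30-2} for (4). The only cosmetic difference is the order in which the estimates are assembled in part~(4) --- you invoke Lemma~\ref{9-30-2} first to make $t_1'$ and $t_2'$ comparable and then bound $d_W(x',y')$, whereas the paper bounds $d_W(x',y')$ first and applies Lemma~\ref{9-30-2} at the very end --- but the underlying argument is identical.
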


\begin{proof}
For $(1)$, let $t_1, t_2\in (0, \infty)$ with $t_1\leq t_2$. Then for $x\in Z$,
$$\Phi_x\left(\log_2\frac{1}{t_1}\right)\geq \Phi_x\left(\log_2\frac{1}{t_2}\right).
$$
Elementary calculations show that
	\begin{align*}
		{\rho_h}(f_x(x, h_1),f_x(x, h_2))
		=&2\log\frac{2^{-\Phi_x\left(\log_2\frac{1}{t_1}\right)}\vee 2^{-\Phi_x\left(\log_2\frac{1}{t_2}\right)}}{\sqrt{2^{-\Phi_x\left(\log_2\frac{1}{t_1}\right)-\Phi_x\left(\log_2\frac{1}{t_2}\right)}}}\\
		=&\left(\Phi_x\left(\log_2\frac{1}{t_1}\right)-\Phi_x\left(\log_2\frac{1}{t_2}\right)\right)\log 2.
	\end{align*}
	By Lemma \ref{lem-phi}, we have
	$$
	{\rho_h}(f_x(x, t_1), f_x(x, t_2))\leq(\log_2 t_2-\log_2 t_1)\theta\log2+ C(\theta, \lambda)=\theta{\rho_h}((x, t_1), (x, t_2))+ C(\theta, \lambda)
	$$
	and
	$$
	{\rho_h}(f_x(x, t_1), f_x(x, t_2))\geq(\log_2 t_2-\log_2 t_1)\frac{\log2}{\theta}-C(\theta, \lambda)=\frac{1}{\theta}{\rho_h}((x, t_1), (x, t_2))-C(\theta, \lambda).
	$$
	Thus each mapping $f_x: R_x\to R_{x^\prime}$ is a $(\theta, k)$-rough quasi-isometry with $k=C(\theta, \lambda)$.\medskip
	
	For $(2)$, let $x, y \in Z$ with $x\neq y$, $t=d_Z(x, y)$, and let $f_x(x, t)=(x^\prime, t^\prime)$. Then the inequality \eqref{log-simi} in Lemma \ref{9-30-1} gives
	$$
	C(\theta, \lambda)^{-1}d_W(x^\prime, y^\prime)\leq t^\prime=2^{-\Phi_x\left(\log_2\frac{1}{t}\right)} \leq C(\theta, \lambda)d_W(x^\prime, y^\prime),
	$$ which is what we need.

	For $(3)$, it is clear since $\Phi_x$ is non-decreasing.
	
	For $(4)$, let $x, y \in Z$ with $x\neq y$, and let $t\geq d_Z(x, y)$.
it follows from the inequality \eqref{log-simi} in Lemma \ref{9-30-1} and the fact that $\Phi_x$ is non-decreasing that
	$$\log_2\frac{1}{d_W(x^\prime, y^\prime)} \geq \Phi_x\left(\log_2\frac{1}{d_Z(x, y)}\right)-C(\theta, \lambda)\geq \Phi_x\left(\log_2\frac{1}{t}\right)-C(\theta, \lambda).$$
	Then we have
$$
d_W(x^\prime, y^\prime)\leq C(\theta, \lambda) \cdot 2^{-\Phi_x\left(\log_2\frac{1}{t}\right)},
$$
which gives
$$
\frac{d_W(x^\prime, y^\prime)+2^{-\Phi_x\left(\log_2\frac{1}{t}\right)}\vee 2^{-\Phi_y\left(\log_2\frac{1}{t}\right)}}{\sqrt{2^{-\Phi_x\left(\log_2\frac{1}{t}\right)-\Phi_y\left(\log_2\frac{1}{t}\right)}}}
\leq  C(\theta, \lambda)2^{\frac{\left|\Phi_x\left(\log_2\frac{1}{t}\right)-\Phi_y\left(\log_2\frac{1}{t}\right)\right|}{2}}.
$$		
Thus we know from
$${\rho_h}(f_x(x, t), f_y(y, t))
		= 2\log\frac{d_W(x^\prime, y^\prime)+2^{-\Phi_x\left(\log_2\frac{1}{t}\right)}\vee 2^{-\Phi_y\left(\log_2\frac{1}{t}\right)}}{\sqrt{2^{-\Phi_x\left(\log_2\frac{1}{t}\right)-\Phi_y\left(\log_2\frac{1}{t}\right)}}}$$
that
\beqq
{\rho_h}(f_x(x, t), f_y(y, t)) \leq C(\theta, \lambda)+\log2\cdot \left|\Phi_x\left(\log_2\frac{1}{t}\right)-\Phi_y\left(\log_2\frac{1}{t}\right)\right|.
\eeqq
	Therefore, Lemma \ref{9-30-2} leads to
	$$		{\rho_h}(f_x(x, t), f_y(y, t))=O_{\theta, \lambda}(1),$$	
which shows that the statement $(4)$ of the lemma is true, and hence, the lemma is proved.
\end{proof}

Suppose that $f$ is a power quasi-symmetry (which includes snowflake mappings) between two metric spaces $(Z, d_Z)$ and $(W, d_W)$. Then
we define
$
\widehat{f}: \text{Con}_h(Z)\to \text{Con}_h(W)
$
as follows: For $(x, t)\in \text{Con}_h(Z)$, let
\begin{equation*}
	\widehat{f}(x, t)=f_x(x, t),
\end{equation*}
where the mapping $f_x$ is defined as in \eqref{eq-def-fx}.

Now, we are ready to prove Theorem \ref{thm-1}.

\begin{proof}[{\bf Proof of Theorem \ref{thm-1}}]
	Assume that $f: Z\to W$ is a $(\theta, \lambda)$-power quasi-symmetry with $\theta\geq 1$ and $\lambda\geq 1$.
	First, we know from \eqref{mon-01} that for every $x\in Z$, $f_x(R_x)=R_{x^\prime}$, where $x^\prime=f(x)$.
	This leads to
	$$
	\widehat{f}(\text{Con}_h(Z))=\bigcup_{x\in Z}f_x(R_x)=\bigcup_{x\in Z}R_{x^\prime}=\text{Con}_h(W),
	$$
	which implies that $\widehat f(\text{Con}_h(Z))$ is cobounded in $\text{Con}_h(W)$.
	
	Next, we show that for any $q_1, q_2\in \text{Con}_h(Z)$,
	\begin{equation}\label{eq-6-5-1}
		\theta^{-1} {\rho_h}(q_1, q_2)-k\leq{\rho_h}\left(\widehat f(q_1), \widehat f(q_2)\right)\leq \theta{\rho_h}(q_1, q_2)+k,
	\end{equation}
	where $k=k(\theta, \lambda)$.
	
For the proof, let $q_1=(x_1, t_1)$, $q_2=(x_2, t_2)$, and let $q_1^\prime=\widehat{f}(q_1)=(x_1^\prime, t_1^\prime)$, $q_2^\prime=\widehat{f}(q_2)=(x_2^\prime, t_2^\prime)$. Set $t=d_Z(x_1, x_2)\vee t_1 \vee t_2$, $p_1=(x_1, t)$, $p_2=(x_2, t)$, $p_1^\prime=\widehat{f}(p_1)=(x_1^\prime, t^\prime)$, and $p_2^\prime=\widehat{f}(p_2)=(x_2^\prime, \widetilde{t}^\prime)$. Then elementary calculations ensure that
$${\rho_h}(p_1, p_2)\leq \log4,$$ and hence,
	$$
	{\rho_h}(q_1, q_2)\leq {\rho_h}(q_1, p_1)+ {\rho_h}(q_2, p_2)+\log 4.
	$$

	Since $t\leq d_Z(x_1, x_2)+t_1\vee t_2$, we have
	$${\rho_h}(q_1, p_1)+ {\rho_h}(q_2, p_2)=2\log\frac{t}{\sqrt{t_1t_2}}\leq {\rho_h}(q_1, q_2).$$
These show that
	\begin{equation}\label{eq-6-5}
		{\rho_h}(q_1, q_2)= {\rho_h}(q_1, p_1)+ {\rho_h}(q_2, p_2)+O(1).
	\end{equation}

Next, we prove the following estimate
\beq\label{wedn-1}
{\rho_h}(q_1^\prime, q_2^\prime)={\rho_h}(q_1^\prime, p_1^\prime)+{\rho_h}(q_2^\prime, p_2^\prime)+O_{\theta, \lambda}(1).
\eeq

For the proof, we consider two possibilities: $t=d_Z(x_1, x_2)$ and $t=t_1 \vee t_2$ since $t=d_Z(x_1, x_2)\vee t_1 \vee t_2$. For the first possibility, it follows from Lemma \ref{lem-fx}(2) that
\begin{equation}\label{eq-28-2}
	C(\theta, \lambda)^{-1}d_W(x_1^\prime, x_2^\prime)\leq \min \{t^\prime, \widetilde{t}^\prime\}\leq \max\{t^\prime, \widetilde{t}^\prime\} \leq C(\theta, \lambda)d_W(x_1^\prime, x_2^\prime).
\end{equation}
Also, by Lemma \ref{9-30-2}, we know that
\beqq
	\left|\Phi_{x_1}\left(\log_2\frac{1}{t}\right)-\Phi_{x_2}\left(\log_2\frac{1}{t}\right)\right|\leq C(\theta, \lambda),
\eeqq
and then, we get
\begin{align}\label{9-29-1}
C(\theta, \lambda)^{-1}\widetilde{t}^\prime=C(\theta, \lambda)^{-1}2^{-\Phi_{x_2}\left(\log_2\frac{1}{t}\right)} \leq & t^\prime=2^{-\Phi_{x_1}\left(\log_2\frac{1}{t}\right)}\notag\\
\leq & C(\theta, \lambda)2^{-\Phi_{x_2}\left(\log_2\frac{1}{t}\right)}=C(\theta, \lambda)\widetilde{t}^\prime.
\end{align}
Since $t\geq t_1\vee t_2$, by Lemma \ref{lem-fx}(3), we have that $t^\prime\geq t_1^\prime$ and $\widetilde{t}^\prime\geq t_2^\prime$.
Then \eqref{9-29-1} leads to
\begin{equation}\label{eq-28-3}
t_1^\prime\vee t_2^\prime\leq C(\theta, \lambda)\min\{t^\prime, \widetilde{t}^\prime\}\leq C(\theta, \lambda)\sqrt{t^\prime \widetilde{t}^\prime},
\end{equation}
which, together with \eqref{eq-28-2}, implies that
	\begin{align*}
		{\rho_h}(q_1^\prime, q_2^\prime)
		=&2\log\frac{d_W(x_1^\prime, x_2^\prime)+t_1^\prime \vee t_2^\prime}{\sqrt{t_1^\prime t_2^\prime}}
		= 2\log\frac{\sqrt{t^\prime \widetilde{t}^\prime}}{\sqrt{t_1^\prime t_2^\prime}}+O_{\theta, \lambda}(1) \\
		=& {\rho_h}(q_1^\prime, p_1^\prime)+{\rho_h}(q_2^\prime, p_2^\prime)+O_{\theta, \lambda}(1),
	\end{align*} which is what we need.
	
For the remaining possibility, that is, $t=t_1 \vee t_2$, without loss of generality, we assume that $t=t_1$. Then $t\geq d_Z(x_1, x_2)$ and $q_1^\prime=p_1^\prime$, and thus, Lemma \ref{lem-fx}(4) gives
	$$
	{\rho_h}(p_1^\prime, p_2^\prime)={\rho_h}(f_{x_1}(x_1, t), f_{x_2}(x_2, t))= O_{\theta, \lambda}(1).
	$$
	By the triangle inequality, we obtain that
	\begin{align*}
		{\rho_h}(q_2^\prime, p_2^\prime)-C(\theta, \lambda) \leq {\rho_h}(q_1^\prime, q_2^\prime)={\rho_h}(p_1^\prime, q_2^\prime)\leq {\rho_h}(q_2^\prime, p_2^\prime)+C(\theta, \lambda),
	\end{align*}
	that is, $${\rho_h}(q_1^\prime, q_2^\prime)={\rho_h}(q_2^\prime, p_2^\prime)+O_{\theta, \lambda}(1).$$ This proves \eqref{wedn-1} since in this case $q_1^\prime=p_1^\prime$.
	
	On the one hand, we have that
	\begin{align*}
		{\rho_h}(q_1^\prime, q_2^\prime)&= {\rho_h}(q_1^\prime, p_1^\prime)+{\rho_h}(q_2^\prime, p_2^\prime)+O_{\theta, \lambda}(1)\;\;\;\;\; \;\;\;\;\mbox{(by \eqref{wedn-1})}
\\
		&\leq \theta({\rho_h}(q_1, p_1)+{\rho_h}(q_2, p_2))+C(\theta, \lambda)\;\;\;\;\mbox{(by Lemma \ref{lem-fx}(1))}
\\
		&\leq \theta{\rho_h}(q_1, q_2)+C(\theta, \lambda).\;\;\;\;\;\;\;\;\; \;\;\;\;\;\;\;\;\;\;\;\;\;\;\;\;\mbox{(by \eqref{eq-6-5})}
	\end{align*}

	On the other hand, we obtain that
	\begin{align*}
		{\rho_h}(q_1^\prime, q_2^\prime)\geq \frac{1}{\theta}({\rho_h}(q_1, p_1)+{\rho_h}(q_2, p_2))-C(\theta, \lambda)
		\geq \frac{1}{\theta}{\rho_h}(q_1, q_2)-C(\theta, \lambda).
	\end{align*}
	These prove \eqref{eq-6-5-1} with $k=C(\theta, \lambda)$, and hence, $\widehat{f}$ is a $(\theta, k)$-rough quasi-isometry. This shows that the first part of Theorem \ref{thm-1} is true.

Similarly, we can prove that the second part of Theorem \ref{thm-1} holds true as well. Based on the arguments in Lemmas \ref{qi} and \ref{lem-phi}, if $f: Z\to W$ is an $(\alpha, C)$-snowflake mapping, then for each $x\in Z$, $\Phi_x$ is an $(\alpha, k_0)$-rough similarity with $k_0=k_0(\alpha, C)$. Hence, by Lemma \ref{lem-fx}, $f_x$ is an $(\alpha, k^\prime)$-rough similarity with $k^\prime=k^\prime(\alpha, C)$ for each $x\in Z$. Using the proof of the first part again, $\widehat{f}$ is an $(\alpha, k^\prime)$-rough similarity with $k^\prime=k^\prime(\alpha, C)$. The proof of Theorem \ref{thm-1} is complete.
\end{proof}

\section{Proof of Theorem \ref{thm-2}}\label{sec-5}

Assume that $(Z, d_Z)$ is a complete metric space. Let $\omega\in \partial_G {\rm Con}_h(Z)$ be given in Lemma \ref{union}.
Define a mapping $\psi: Z\to \partial_\omega {\rm Con}_h(Z)$ by letting
\beq\label{11-12-2}
\psi(z)=\xi,
\eeq
where $\xi\in \partial_\omega {\rm Con}_h(Z)$ is the equivalence class such that each element $\{(z, h_n)\}$ in the class satisfies $\lim_{n\to \infty}h_n=0$ and converges to $\xi$ with respect to $\omega$, see Lemma \ref{11-12}.

Fix a point $z_0\in Z$, let $\gamma=R_{z_0}|_{[0,\infty)}$. Then $\omega$ is the endpoint of $\gamma$.
In the following, we fix the Busemann function $b:=b_\gamma\in\mathcal B(\omega)$.

\begin{proof}[{\bf Proof of Theorem \ref{thm-2}}]
To prove that $\partial_\omega {\rm Con}_h(Z)$ and $Z$ can be identified as sets, it suffices to show that $\psi: Z\to \partial_\omega {\rm Con}_h(Z)$ is a bijection.

First, we prove that $\psi$ is injective.
For this, let $z_1$, $z_2\in Z$, and let $\{x_n=(z_1,h_n)\}$ and $\{y_n=(z_2, s_n)\}$ be two sequences in ${\rm Con}_h(Z)$ with
$$
\lim_{n\to \infty}h_n=\lim_{n\to \infty}s_n=0.
$$
Then $\{x_n\}$ converges to $\psi(z_1)$ with respect to $\omega$, and $\{y_n\}$ converges to $\psi(z_2)$ with respect to $\omega$.
It follows from \eqref{esti-b} that
\begin{align}\label{imp}
(x_n| y_n)_b=-\log(d_Z(z_1, z_2)+h_n\vee s_n),
\end{align}
and then, by letting $n\to \infty$ and using Lemma \ref{lem-fun-b}, we know that there is a constant $C=C(\delta)\geq 1$ such that
\begin{equation}\label{visual-metric}
C^{-1} d_Z(z_1, z_2)\leq e^{-(\psi(z_1) | \psi(z_2))_b}\leq C d_Z(z_1, z_2),
\end{equation}
which shows that $\psi$ is injective.

Second, we show that $\psi$ is surjective. To reach this goal, let $\xi\in \partial_\omega {\rm Con}_h(Z)$, and let $\{x_n=(z_n, h_n)\}$ be a sequence converging to $\xi$ with respect to $\omega$. Since
\begin{equation*}
(x_n| x_m)_b=-\log(d_Z(z_n, z_m)+h_n\vee h_m),
\end{equation*}
by letting $m$, $n\to \infty$, we have
\begin{equation}\label{to0}
d_Z(z_n, z_m)\to 0 \ \ \text{ and } \ \ h_n, h_m\to 0.
\end{equation}
Then $\{z_n\}$ is a Cauchy sequence in $Z$, and so, it converges to a point $z\in Z$ by the assumption that $(Z, d_Z)$ is a complete metric space.

We claim that $\psi(z)=\xi$. For the proof, let $\{x'_n=(z, u_n)\}$ be a sequence in ${\rm Con}_h(Z)$
with $\lim_{n\to \infty}u_n=0$. Then $\{x'_n\}$ converges to $\psi(z)$ with respect to $\omega$.  We infer from \eqref{to0} that
\begin{equation*}
(x'_n| x_n)_b=-\log(d_Z(z,z_n)+u_n\vee h_n)\to \infty \ \ \text{ as }  \ \ n\to\infty,
\end{equation*}
which shows that $\{x'_n\}$ and $\{x_n\}$ are equivalent  with respect to $\omega$. Hence, $\psi(z)=\xi$ and $\psi$ is surjective.
Now we conclude that $\psi: Z\to \partial_\omega {\rm Con}_h(Z)$ is a bijection.

For $z_1$, $z_2\in Z$, let
$$
d(\psi(z_1), \psi(z_2)):=d_Z(z_1, z_2).
$$
Then it follows from \eqref{visual-metric} that $d_Z$ induces a visual metric $d$ on $\partial_\omega {\rm Con}_h(Z)$ based at $\omega$ with parameter $1$. The proof of this theorem is complete.
\end{proof}

\begin{rem}\label{rem-3-1}
Suppose that $(Z, d_Z)$ is a complete bounded metric space. Let $o=(z, h)\in {\rm Con}(Z)$. For any $x=(z_1, h_1)$, $y=(z_2, h_2)\in {\rm Con}_h(Z)$,
\begin{equation}\label{rem-eq-1}
(x|y)_o=-\log(d_Z(z_1, z_2)+h_1\vee h_2)+\log\frac{(d_Z(z_1, z)+h_1\vee h)(d_Z(z_2, z)+h_2\vee h)}{h}.
\end{equation}

If a sequence $\{(z_n, h_n)\}$ does not converge to $\omega$, then $h_n$ does not go to infinity as $n\to\infty$ by Lemma \ref{union}. So, any sequence $\{(z_n, h_n)\}$ converges to a point in $\partial_G{\rm Con}_h(Z)\setminus \{\omega\}$ if and only if
\begin{equation*}
d_Z(z_n, z_m)+h_n\vee h_m\to 0 \ \ \text{ as } m, n\to\infty.
\end{equation*}
This implies that $\partial_G{\rm Con}_h(Z)\setminus \{\omega\}=\partial_G {\rm Con}(Z)$, and then, by Proposition \ref{prop-Gromov}, as sets,
$$
\partial_G {\rm Con}(Z)=\partial_\omega {\rm Con}_h(Z).
$$
An elementary calculation by using \eqref{esti-b} and \eqref{rem-eq-1} shows that for any $x$, $y\in {\rm Con}(Z)$,
$$
|(x|y)_o-(x|y)_b|\leq 2\log (2\diam Z),
$$
which shows that $\vartheta_{\varepsilon, b}$ is biLipschitz to $\vartheta_{\varepsilon, o}$. Theorem \ref{thm-2} recovers \cite[Theorem 8.1]{BSC} when $(Z, d_Z)$ is a complete bounded metric space.
 \end{rem}

\section{Proof of Theorem \ref{thm-3}}\label{sec-6}

Assume that $X$ is a visual Gromov $\delta$-hyperbolic  metric  space for some $\delta\geq 0$ and assume that $\omega\in\partial_G X$.
 Before going to proof  Theorem \ref{thm-3}, we first gives an illustration that Theorem \ref{thm-3} does not depend on the choices of visual metrics on $\partial_G X$ and $\partial_\omega X$.

If $d_1$, $d_2$ are two visual metrics on $\partial_G X$, then ${\rm Con}((\partial_G X, d_1))$ and ${\rm Con}((\partial_G X, d_2))$ are roughly similar by \cite[Theorem 7.4]{BSC}.
Similar arguments also ensure that ${\rm Con}((\partial_\omega X, d_1))$ and ${\rm Con}((\partial_\omega X, d_2))$ are roughly similar with visual metrics $d_1$, $d_2$ with respect to $\omega$ by using Theorem \ref{thm-1}.


Now, we define a visual metric on $\partial_G X$. The existence of such metric follows from \cite[Lemma 6.1]{BSC}: Fix a point $o\in X$, there is a $\varepsilon_0>0$ such that when $\varepsilon \delta\leq \varepsilon_0$, we can find a  visual metric $d_{o, \varepsilon}$ on $\partial_G X$ satisfying
 \begin{equation}\label{eq-m-d}
\frac{1}{2}e^{-\varepsilon(x|y)_o}\leq d_{o, \varepsilon}(x, y)\leq e^{-\varepsilon(x|y)_o}
\end{equation}
for all $x, y\in \partial_G X$.

In the following, we fix a $\varepsilon>0$ with $\varepsilon\delta\leq \varepsilon_0$, and let $d:=d_{o, \varepsilon}$.
Then we get a complete bounded metric space $(\partial_G X, d)$, see \cite[Proposition 6.2]{BSC}.

Next, we define a visual metric with respect to $\omega$ on $\partial_\omega X$. Consider the inversion $\rho$ of $d$ with respect to $\omega$, which is defined by
\begin{equation}\label{eq-rho}
\rho(x, y)=\frac{d(x, y)}{d(x, \omega)d(y, \omega)}
\end{equation}
for any $x, y\in\partial_\omega X$. Then $\rho$ is a quasimetric on $\partial_\omega X$.
There exists a metric $\rho_0$ on $\partial_\omega X$ such that $\rho$ is biLipschitz equivalent to $\rho_0$ by \cite[Lemma 3.1]{BHX}. More precisely, for any  $x, y\in \partial_\omega X$,
 \begin{equation}\label{eq-rho-0}
\frac{1}{4}\rho(x, y)\leq \rho_0(x, y)\leq \rho(x, y).
\end{equation}
See \cite[Section 3]{BHX} or \cite[Section 5.3]{BuSc} for more informations about the inversion of metric.

Combining \eqref{eq-m-d}, \eqref{eq-rho} and \eqref{eq-rho-0}, we see that there is a constant $c\geq1$ such that for any $x, y\in \partial_\omega X$,
\begin{equation}\label{eq-ve}
c^{-1}\frac{e^{-\varepsilon (x|y)_o}}{e^{-\varepsilon (x|\omega)_o-\varepsilon (y|\omega)_o}}\leq \rho_0(x, y)\leq c\frac{e^{-\varepsilon (x|y)_o}}{e^{-\varepsilon (x|\omega)_o-\varepsilon (y|\omega)_o}}.
\end{equation}

Let $b(\cdot)=b_{\omega, o}(\cdot)$, where $b_{\omega, o}(\cdot)$ is given in \eqref{Busemann}.
We infer from \cite[Example 3.2.1]{BuSc} that for any $u, v\in X$,
 $$
(u|v)_b=(u|v)_o-(u|\omega)_o-(v|\omega)_o,
 $$
and then, by using Lemma \ref{lem-fun-b} and \cite[Lemma 2.2.2]{BuSc}, for any $x, y\in\partial_\omega X$,
\begin{equation*}
(x|y)_b=(x|y)_o-(x|\omega)_o-(y|\omega)_o+O_\delta(1).
\end{equation*}
It follows from \eqref{eq-ve} that there is a $c_0\geq 1$ such that  for any $x, y\in \partial_\omega X$,
\begin{equation*}
c_0^{-1}e^{-\varepsilon (x|y)_b}\leq \rho_0(x, y) \leq c_0 e^{-\varepsilon (x|y)_b}.
\end{equation*}
This shows that $\rho_0$ is a visual metric on $\partial_\omega X$ based at $\omega$ with parameter
$\varepsilon$. In the following, we denote
\begin{equation*}
\partial_\omega X:=(\partial_\omega X, \rho_0)
\end{equation*}
and
\begin{equation*}
\partial_G X:=(\partial_G X, d).
\end{equation*}

\begin{proof}[{\bf Proof of Theorem \ref{thm-3}}]

Since $X$ is a visual Gromov hyperbolic metric space, $X$ is roughly similar to ${\rm Con}(\partial_G X)$ by \cite[Theorem 8.2]{BSC}. Recall that $\partial_G X$ is a complete bounded metric space, and $\partial_\omega X$ is a complete metric space.

By Theorem \ref{Thm-A}, the truncated hyperbolic filling
$$Y:={\rm Hyp}_T(\partial_G X)$$
is also a visual Gromov hyperbolic space. Moreover, there is an identification $\partial_G Y=\partial_G X$ of sets, and $d$ is biLipschtiz equivalent to $\alpha^{-(\cdot|\cdot)_o}$.
Here, $\alpha>10$ is one construction parameter of ${\rm Hyp}(\partial_G X)$.
Then $d$ is also a visual metric on $\partial_G Y$.  We equip $\partial_G Y$ with this metric $d$,  still denote this bounded metric space by
$$\partial_G Y:=(\partial_G Y,d).$$
Obviously, $\partial_G X$ is isometric to $\partial_G Y$, and thus, ${\rm Con}(\partial_G X)$ is roughly similar to ${\rm Con}(\partial_G Y)$ by \cite[Theorem 7.4]{BSC}.

By using \cite[Theorem 8.2]{BSC} again, $Y$ is roughly similar to ${\rm Con}(\partial_G Y)$, and then, is roughly similar to ${\rm Con}(\partial_G X)$.
As a result, $X$ is roughly similar to $Y$.

Recall that on $\partial_\omega X$, the visual metric $\rho_0$ is biLipschitz equivalent to the inversion
$$
\rho(x, y)=\frac{d(x, y)}{d(x, \omega)d(y, \omega)}.
$$
Using \cite[Theorem 7]{Jordi} and \cite[Remark 4]{Jordi}, we see that $Y={\rm Hyp}_T(\partial_G X)$ is roughly isometric to ${\rm Hyp}(\partial_\omega X)$, and hence, is roughly similar to ${\rm Con}_h(\partial_\omega X)$ by Theorem \ref{thm-2.9}.

Therefore,  $X$ is roughly similar to ${\rm Con}_h(\partial_\omega X)$.
\end{proof}

  \subsection*{Acknowledgments}
The first author (Manzi Huang) was partly supported by NNSF of China under the number 12371071.
The second author (Zhihao Xu) was partly supported by NNSF of China under the number 12071121 and Postgraduate Scientific Research Innovation Project of Hunan Province under the number CX20220506.
\vspace*{5mm}

\noindent Manzi Huang,

\noindent
MOE-LCSM, School of Mathematics and Statistics, Hunan Normal University, Changsha, Hunan 410081, People's Republic of
China.

\noindent{\it E-mail address}:  \texttt{mzhuang@hunnu.edu.cn}\medskip\medskip

\noindent Zhihao Xu,

\noindent
MOE-LCSM, School of Mathematics and Statistics, Hunan Normal University, Changsha, Hunan 410081, People's Republic of
China.

\noindent{\it E-mail address}:  \texttt{zhxu@hunnu.edu.cn}

\end{document}